\newtheorem{thm}{Theorem}[section]
\newtheorem{cor}[thm]{Corollary}
\newtheorem{lem}[thm]{Lemma}
\newtheorem{prop}[thm]{Proposition}
\theoremstyle{definition}
\newtheorem{defn}[thm]{Definition}
\theoremstyle{remark}
\numberwithin{equation}{section}
\DeclareMathOperator{\im}{Im}
\DeclareMathOperator{\id}{Id}
\newcommand{\A}{\mathcal{A}}
\newcommand{\B}{\mathcal{B}}
\newcommand{\C}{\mathcal{C}}
\newcommand{\W}{\mathcal{W}}
\newcommand{\X}{\mathcal{X}}
\newcommand{\Y}{\mathcal{Y}}
\newcommand{\p}{\mathcal{P}}
\newcommand{\To}{\longrightarrow}
\newcommand{\G}{\mathcal{G}}
\newcommand{\Go}{\mathcal{G}^{(0)} }
\newcommand{\gad}{(\G,\{A_u\}_{u\in \G^{(0)}}, \alpha)}
\newcommand{\gbd}{(\G,\{B_u\}_{u\in \G^{(0)}}, \beta)}
\newcommand{\gccd}{(\G,\{C_u\}_{u\in \G^{(0)}}, \gamma)}
\newcommand{\gwd}{(\G,\{W_u\}_{u\in \G^{(0)}}, \omega)}
\newcommand{\phiu}{\{\varphi_u\}_{u\in \G^{(0)}}}
\newcommand{\psiu}{\{\psi_u\}_{u\in \G^{(0)}}}
\newcommand{\pu}{\{p_u\}_{u\in \G^{(0)}}}
\newcommand{\qu}{\{q_u\}_{u\in \G^{(0)}}}
\newcommand{\theu}{\{\theta_u\}_{u\in \G^{(0)}}}
\journal{Documenta Mathematica}
\begin{document}
\begin{frontmatter}

\title{$C^*$-simplicity and Ozawa conjecture for groupoid $C^*$-algebras, part I: injective envelopes}

\author{Massoud Amini, Farid Behrouzi}

\address{Department of Mathematics, Faculty of Mathematical Sciences, Tarbiat Modares University,
Tehran 14115-134, Iran\\
School of Mathematics, Institute for Research in Fundamental Sciences (IPM)\\
Tehran 19395-5746, Iran\\
mamini@modares.ac.ir, mamini@ipm.ir}

\address{Faculty of Mathematics, Alzahra University, Vanak,
Tehran 19938-91176, Iran\\
f\_behrouzi@alzahra.ac.ir}


\begin{abstract}
This paper studies injective envelopes of groupoid dynamical systems and the corresponding boundaries.  Analogue  to the group  case, we associate a bundle of compact Hausdorff spaces to any (discrete) groupoid (the Hamana boundary of the groupoid). We study bundles of compact topological spaces equipped with an action of a groupoid.  We show that  any  groupoid  has a minimal boundary (the Furstenberg boundary of the groupoid). We prove that the Hamana  and  Furstenberg boundaries are the same, for (discrete) groupoids. We find the relation between the reduced crossed product of the $\G$-injective envelop of a groupoid dynamical system and the injective envelope of the reduced crossed product of the original system.

\end{abstract}

\begin{keyword}
groupoid dynamical system\sep  injective envelop\sep  boundary


\MSC[2008] 46l35\sep 20F65

\vspace{.3cm}
The first author was partly supported by grants from IPM (94430215)

\end{keyword}

\end{frontmatter}

\tableofcontents



\section{Introduction}
Let $G$ be a discrete group and $V$ be an operator system, i.e., a unital self-adjoint closed subspsace of a unital C*-algebra. We say that $V$ is a $G$-module ($G$-operator system) if there is  a homomorphism $\alpha$ from $G$ into the group of all unital complete order isomorphisms of $V$. In this case, we say that the triple $(G, V, \alpha)$  is a (group) dynamical system. Hamana in \cite{ham1} studied injectivity in the category whose objects are $G$-operator systems and whose morphisms are completely positive unital $G$-homomorphisms. In this setting, he proved that every $G$-operator system $V$ has  a unique $G$-injective envelope $I_G(V)$, i.e., a minimal $G$-injective $G$-operator system containing $V$ as sub-$G$-operator system.  Hamana obtained the $G$-injective envelope of $V$ by first embedding $V$ into a $G$-injective operator system $W$. He then obtained a minimal $V$-projection of $W$ and proved that the $G$-injective envelope is the rang of this projection. For   $\mathbb{C}$  with the trivial action $G$, the $G$-injective envelope of $\mathbb{C}$ is $C(X)$ for a compact Hausdorff space $X$ \cite{hp}.  Moreover,  $X$ is a $G$-space, called the Hamana boundary of $G$, denoted by $\partial _H G$.   It is proved in \cite{kk} that the action of $G$ on $\partial _H G$ is a boundary action, i.e.,   it is minimal and strongly proximal (an action is minimal if it has dense orbits, and strongly proximal if for any probability measure $\mu$ on $X$, the weak*-closure of the orbit $G\cdot\mu$ contains a point mass; see  \cite{fur, g} for more details).

On the other hand, Furstenberg in \cite{fur} proved that for any discrete group $G$, there is  a unique (up to $G$ -isomorphism) maximal $G$-boundary  $\partial _F G$. Maximality here means that  every  $G$-boundary is a quotient of $\partial _FG$. This is called the Furstenberg bounadry of $G$. Kalantar and Kennedy in \cite{kk} proved that for a discrete group $G$, the Furstenberg and  Hamana boundaries are $G$-isomorphic. They also relate this to the notion of exactness of groups, introduced by Kirchberg and Wasserman \cite{kw}. Ozawa proved in
\cite[Theorem 3]{oz} that a discrete group is exact if and only it acts amenably on its Stone-$\check{C}$ech
compactification, and authors in \cite{kk} show the same for the action on the Furstenberg boundary. More generally, Wasserman \cite{w} showed that a $C^*$-algebra is exact if it can be embedded
into a nuclear $C^*$-algebra (the converse is also true by a result of Kirchberg \cite{k}). Ozawa conjectured in \cite{oz} that for an  exact $C^*$-algebra $A$, there is a nuclear $C^*$-algebra between $A$ and its injective envelope. One of the main objectives of \cite{kk} was to prove this for reduced $C^*$-algebras of discrete
exact groups. Along the way, they also contributed to the $C^*$-simplicity problem by showing that the reduced crossed product of $C(\partial _FG)$ by the canonical action of $G$  is simple if and only if the action is topologically free.

This paper seeks an appropriate extension of these notions and results to (discrete) groupoids. As far as we know, none  of the above results is explored for groupoids. The motivation of the paper is two folds. First we want to introduce appropriate notions of $\G$-boundary for groupoids and show that the Hamana and Furstenberg boundaries are the same (Theorem \ref{main}). Also, we would like to make tools for checking the $C^*$-simplicity and Ozawa conjecture for groupoid $C^*$-algebras (and crossed products), something which is pursued in a forthcoming paper \cite{ab}, in which the nuclearity of crossed products under exact groupoid actions is studied.

The paper is organized as follows. In Section 2, we introduce basic notions of  groupoids and groupoid dynamical systems. For an $r$-discrete groupoid $\G$, we describe  notions  such as $\G$-essentiality, $\G$-rigidity, $\G$-injectivity  and prove the existence and uniqueness of the injective envelope of  groupoid dynamical systems.
In Section 3, we proved that for any groupoid dynamical system $\A$, there is a minimal injective dynamical system $I_{\G}(\A)$ "containing" $\A$. We prove this by showing that every groupoid dynamical system $\A$ can be embedded
 into an injective dynamical system, and then   find a minimal $\A$-projection with $I_{\G}(\A) $ as its range. As an important example, the injective envelope of the trivial groupoid dynamical system with one dimensional fibers, gives a bundle $\{X_u\}_{u\in \Go}$ of compact Hausdorff spaces, called  the Hamana boundary of $\G$.
In section 4,  we consider an $r$-discrete groupoid $\G $  acting on a bundle  of compact Hausdorff spaces over the unit space $\Go$ of $\G$, and study minimality, strong proximality and boundary actions in this case. We show that there is a unique maximal $\G$-boundary, called the Furstenberg boundary.
We show that the Hamana boundary is $\G$-isomorphic to the Furstenberg boundary. We also study the (reduced) crossed products of the groupoid dynamical systems and show that the reduced crossed product of the $\G$-injective envelop of such a system is included in the injective envelope of the reduced crossed product. This is essential in \cite{ab}, where we want to prove the Ozawa conjecture for groupoid crossed products.

\section{Groupoid  dynamical systems}

We review basic facts on groupoids. For more details we refer the reader to \cite{muh, pat, renu}.
\begin{defn}\label{d1}
A groupoid is a set $\G$ endowed with a product map: $\G^2\To \G$; $(g, h)\mapsto gh$, where $\G^2$ is a subset of $\G\times \G$, called the set of composable pairs, and an inverse map: $\G\To \G$; $g\mapsto g^{-1}$  such that
\begin{enumerate}
\item $(g^{-1})^{-1}=g$,
\item if $(g, h)\in \G^2$ and $(h, k)\in \G^2$, then $(gh, k), (g, hk) \in \G^2$ and $(gh)k=g(hk)$,
\item $(g^{-1}, g)\in \G^2$ and if $(g, h)\in \G^2$, then $g^{-1}(gh)=h$,
\item $(g, g^{-1})\in \G^2$ and if $(h, g)\in \G^2$, then $(hg)g^{-1}=h$,
\end{enumerate}
for each $f,g,h\in\G$.
\end{defn}
The unit space $\G^0$ is the subset of elements $gg^{-1}$, where $g$ ranges over $\G$. The range and source maps $r:\G\To \G^0$ and $d: \G\To \G^0$ are defined by $r(g)=gg^{-1}$ and $d(g)=g^{-1}g$. A pair $(g, h)$ belongs to  $\G^2$  if and only if $d(g)=r(h)$. For each $u\in \G^0$, the subsets $\G_u$ and $\G^u$ are defined  by $\G_u=d^{-1}(\{u\})$ and $\G^{u}=r^{-1}(\{ u\})$.

An operator system is a closed, self-adjoint subspace of a unital $C^*$-algebra containing its unit, or equivalently, a closed, self-adjoint subspace of $B(H)$ containing the identity operator on the Hilbert space $H$. In the latter case, we say that $A$ is an operator system on $H$ (see \cite{er} for more details).
\begin{defn}
A groupoid dynamical system is a triple\[ \mathcal{A}=\gad,\] such that
\begin{enumerate}
\item $\G$ is a groupoid,
\item for each $u\in \Go$, $A_u$ is an operator system,
\item  for each $g\in \G$, $\alpha_g: A_{d(g)}\To A_{r(g)}$  is a complete order  isomorphism,
\item for each $ (g,h)\in \G^{(2)}$,  $\alpha_g\alpha_h=\alpha_{gh}$,
\end{enumerate}
for $g\in \G$ and $a\in A_{d(g)}$, we write  $g\cdot a$ for $\alpha_g(a)$.
\end{defn}


The following definition uses the notion of completely positive (c.p.) maps between $C^*$-algebras. The notion is also meaningful for maps between operator systems (see \cite{er} for more details). We assume that all completely positive maps are unital.

\begin{defn}
 A $\G$-morphism between  systems $\mathcal{A}=\gad$  and $\mathcal{B}=\gbd$ is a family $\{\varphi_u\}_{u\in \Go}$ of maps such that
 \begin{enumerate}
 \item for any  $u$,
 $\varphi_u: A_u\To B_u$ is a  c.p. map,
 \item  for any  $g\in \G$ and $a\in \G^{d(g)},$
 \[\beta_g(\varphi_{d(g)}(a))=\varphi_{r(g)}(\alpha_g(a)),\]
\end{enumerate}
i.e., for each $g$, the following diagram is commutative:
\[\xymatrix{A_{d(g)}\ar[r]^{\varphi_{d(g)}}\ar[d]_{\alpha_{g} } & B_{d(g)}\ar[d]^{\beta_{g}}\\
A_{r(g)}\ar[r] ^{\varphi_{r(g)}}& B_{r(g)}}.\]

The composition of $\G$-morphisms $\{\varphi_u\}_{u\in \Go}$  and $ \{\psi_u\}_{u\in \Go}$ (if it makes sense)  is defined by
 \[\{\psi_u\}_{u\in \Go}\circ \{\varphi_u\}_{u\in \Go}=\{\psi_u\circ \varphi_u\}_{u\in \Go}.\]
\end{defn}
Let
$\mathcal{A}=\gad, \ \ \ \mathcal{B}=\gbd, \ \ \ \mathcal{C}=\gccd, $ and $\mathcal{W}=\gwd$ be dynamical systems and
$\Phi=\phiu, \ \Psi=\psiu,$ and $\Theta=\theu$ be $\G$-morphisms. A $\G$-morphism $\Phi: \mathcal{A} \To\mathcal{B} $ is a $\G$-injection  (resp., a $\G$-isomorphism) if for any $u$, $\varphi_u : A_u\To B_u$ is an  injection  (resp., a  complete order  isomorphism).

\begin{defn}
A $\G$-extension of a  groupoid dynamical system $\A$ is a pair $(\W, \Theta)$ such that $\Theta: \A\To \W$  is a $\G$-embedding.
Moreover,
\begin{enumerate}
\item  $(\W, \Theta) $  is $\G$-essential  if for any $\G$-morphism $\Phi: \W\To \C$, $\Phi $ is a  $\G$-injection  whenever $\Phi\circ \Theta$ is a
$\G$-injection,
 \item $(\W, \Theta) $ is $\G$-rigid if for any $\G$-morphism $\Phi: \W\To \W$, $\Phi\circ \Theta=\Theta$ implies  $\varphi_u=id_{W_u}$, for all $u\in \Go$.
\end{enumerate}
\end{defn}
\begin{defn}
A groupoid dynamical system $\W$ is called $\G$-injective if for systems $\A$ and $\B$ and  $\G$-injective morphism $\Phi: \A\To \B$ and arbitrary $\G$-morphism $\Psi:\A\To\W$, there exists a $\G$-morphism
$\Theta:\B\To\W$ such that $\Theta\circ \Phi=\Psi$, that is, for any $u\in \Go$, the following diagram is commutative:
 \[\xymatrix{
A_u\ar[r]^{\varphi_u}\ar[d]^{\psi_u}&B_u\ar@{.>}[ld]^{\theta_u}\\
W_u.}\]
\end{defn}

\section{Injective envelopes and Hamana boundary}
In this section we explore the existence and uniqueness of injective objects in the category of groupoid dynamical systems. An operator system is a closed, self-adjoint subspace of a unital $C^*$-algebra containing its unit, or equivalently, a closed, self-adjoint subspace of $B(H)$ containing the identity operator on the Hilbert space $H$. In the latter case, we say that $A$ is an operator system on $H$ (see \cite{er} for more details). The injective envelopes of operator systems are studied by Hamana \cite{ham2}.

Let $X, X_0$ be sets and $s: X\To X_0$ be a surjective function. Let $\mathcal{A}=\{A_u\}_{u\in X_0}$ be a family of operator spaces. A section of $\mathcal A$ is a function $f: X\To \bigcup_{x\in X_0}$ such that $f(x)\in A_{s(x)}$ and
\[ \|f\|_{\infty}=\sup_{x\in X}\|f(x)\|< \infty.\]We denote the set of all sections of $\mathcal{A}$ by $\Gamma_{\infty}(X, s^*\A)$. The space $\Gamma_{\infty}(X, s^*\A)$ of sections form an is an operator system with pointwise operations and involution
and the above norm. We  denote the set of  functions with finite support in $\Gamma_{\infty}(X, s^*\mathcal{A}) $   by $\Gamma_c(X, s^*\mathcal{A}) $. This  is  a *-subspace of $\Gamma_{\infty}(X, s^* \mathcal{A})$. If each $A_u$  is a $C^*$-algebra, then $\Gamma_{\infty}(X, s^*\mathcal{A}) $ is a $C^*$-algebra and $\Gamma_c(X, s^*\mathcal{A}) $  is  a *-subalgebra.

\begin{lem}\label{ll1}
If $\mathcal{A}=\{A_u\}_{u\in X_0}$ is a family of injective operator
systems and $s: X\To X_0$ is  surjective, then $\Gamma_{\infty}(X, s^*\A)$ is an injective operator system.
\end{lem}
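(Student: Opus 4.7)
The plan is to reduce the problem to the injectivity of each individual $A_u$ by recognizing $\Gamma_{\infty}(X,s^*\A)$ as a bounded (coordinate-wise) product of the $A_{s(x)}$ and extending point by point. Concretely, suppose an operator system inclusion $E\hookrightarrow F$ and a unital completely positive (ucp) map $\varphi:E\to\Gamma_{\infty}(X,s^*\A)$ are given; I must produce a ucp extension $\tilde\varphi:F\to\Gamma_{\infty}(X,s^*\A)$.

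First, for each $x\in X$, I would consider the point-evaluation map $\pi_x:\Gamma_{\infty}(X,s^*\A)\to A_{s(x)}$ given by $\pi_x(f)=f(x)$. Since the operator system structure on $\Gamma_{\infty}(X,s^*\A)$ (including the matrix order on each $M_n(\Gamma_{\infty}(X,s^*\A))$) is defined pointwise, $\pi_x$ is ucp. Composing, $\varphi_x:=\pi_x\circ\varphi:E\to A_{s(x)}$ is ucp as well.

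Second, by the hypothesis that each $A_u$ is an injective operator system, each $\varphi_x$ admits a ucp extension $\tilde\varphi_x:F\to A_{s(x)}$. I would then assemble these into $\tilde\varphi:F\to\Gamma_{\infty}(X,s^*\A)$ by setting $\tilde\varphi(b)(x):=\tilde\varphi_x(b)$. Well-definedness into $\Gamma_{\infty}$ is the only point requiring attention, and it is immediate because any ucp map is contractive: $\|\tilde\varphi(b)\|_{\infty}=\sup_{x\in X}\|\tilde\varphi_x(b)\|\le\|b\|<\infty$, so $\tilde\varphi(b)$ is a bounded section.

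Finally, I would verify the three required properties. The map $\tilde\varphi$ extends $\varphi$ by construction: for $a\in E$ and any $x\in X$, $\pi_x(\tilde\varphi(a))=\tilde\varphi_x(a)=\varphi_x(a)=\pi_x(\varphi(a))$, and sections are separated by the $\pi_x$. Unitality of $\tilde\varphi$ is inherited coordinate-wise from that of the $\tilde\varphi_x$. Complete positivity follows because, for $[b_{ij}]\in M_n(F)^+$, the matrix $[\tilde\varphi(b_{ij})]\in M_n(\Gamma_{\infty}(X,s^*\A))$ is positive iff its evaluation at every $x$ is, and each $\tilde\varphi_x$ is ucp. There is no real obstacle here beyond bookkeeping; the statement is the standard fact that $\ell^{\infty}$-direct products of injective operator systems are injective, applied to the bundle $s^*\A$.
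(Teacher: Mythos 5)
Your proposal is correct and follows essentially the same argument as the paper: compose with the point evaluations $f\mapsto f(x)$, extend each resulting map $\varphi_x$ into $A_{s(x)}$ by injectivity of the fiber, and reassemble the extensions pointwise into a section. The paper phrases the extension problem via an injective completely positive map $\theta:B\to C$ rather than an inclusion $E\hookrightarrow F$, and it omits the routine checks (boundedness of the assembled section, unitality, pointwise complete positivity) that you spell out, but the underlying idea is identical.
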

\begin{proof}
Let  $B$ and $C$ be operator spaces and $\theta : B\To C$ be an injective completely  positive map. To each completely  positive map $\varphi: B\To \Gamma_{\infty}(X, s^*\A)$  and $x\in X$ we associate the map $\varphi_x : B\To A_{s(x)}$ by $\varphi_x(b)=\varphi(b)(x)$. By injectivity of $A_{s(x)}$, there exists a completely positive map $\psi_x: C\To A_{s(x)}$ such that $\psi_x\circ \theta=\varphi_x$. Define $\psi: C\To \Gamma_{\infty}(X, s^*\A)$ by
$\psi(c)(x)=\psi_x(c)$, for $c\in C$ and $x\in X$. Then
\[\psi\circ\theta(b)(x)=\psi_x(\theta(b))=\varphi_x(b)=\varphi(b)(x),\]
for $b\in B$ and $x\in X$.
\end{proof}

Suppose that $\G$ is a groupoid and $\A=\{A_u\}_{u\in \Go} $ is a family of operator systems.
Then $(\G, \{\Gamma_{\infty}( \G^u, s^*\mathcal{A})\}_{u\in\Go}, \ell)$ becomes a groupoid dynamical system with the action
\[\ell_g: \Gamma_{\infty}( \G^{d(g)}, s^*\mathcal{A})\To\Gamma_{\infty}( \G^{r(g)}, s^*\mathcal{A}),\]
\[\ell_g(f)(x)=f(g^{-1}x).\]
For  $u\in \Go$, define
\[\im_u: A_u \To \Gamma_{\infty}( \G^u, s^*\mathcal{A})\]
by $\im_u (a)(g)=g^{-1}. a$. Then $\{\im_u\}_{u\in \Go}$ is an injective $\G$-morphism.

The next lemma extends \cite[Lemma 2.2]{ham1}.
\begin{lem}\label{ll2 }
If $\A=\{A_u\}_{u\in \Go} $ is a family of  injective operator systems, then \[(\G, \{ \Gamma_{\infty}(\G^u, s^*\A)\}_{u\in\Go}, \ell)\] is a $\G$-injective groupoid dynamical system.
\end{lem}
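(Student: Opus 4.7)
My plan is to adapt Hamana's argument for the group case, with the key insight that the equivariance constraint on a $\G$-morphism into $\{\Gamma_\infty(\G^u,s^*\A)\}_{u\in\Go}$ reduces the extension problem to a family of independent fiberwise extensions over the unit space. Let $\Phi=\{\varphi_u\}:\B\to\C$ be a $\G$-injection and $\Psi=\{\psi_u\}:\B\to\W$ an arbitrary $\G$-morphism, where $W_u=\Gamma_\infty(\G^u,s^*\A)$. First I would pass to the pointwise slices $\psi^g(b):=\psi_{r(g)}(b)(g)\in A_{d(g)}$ for $g\in\G$ and $b\in B_{r(g)}$. The $\G$-equivariance of $\Psi$, written as $\ell_g\psi_{d(g)}=\psi_{r(g)}\beta_g$ and evaluated at $x=g$, yields the reduction identity
\[
\psi^g(b)=\psi^{d(g)}\bigl(\beta_{g^{-1}}(b)\bigr),
\]
so the whole family $\{\psi^g\}_{g\in\G}$ is determined by its restriction $\{\psi^u:B_u\to A_u\}_{u\in\Go}$ to the unit space.

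The second step is to extend fiberwise over the unit space. For each $u\in\Go$, the injectivity of the operator system $A_u$ provides a u.c.p.\ map $\theta^u:C_u\to A_u$ with $\theta^u\circ\varphi_u=\psi^u$. I then propagate these by setting
\[
\theta^g(c):=\theta^{d(g)}\bigl(\gamma_{g^{-1}}(c)\bigr)\quad(g\in\G,\ c\in C_{r(g)}),
\]
which is u.c.p.\ as a composition of such, and I define $\theta_u:C_u\to\Gamma_\infty(\G^u,s^*\A)$ by $\theta_u(c)(g):=\theta^g(c)$ for $g\in\G^u$. Standard arguments show that $\theta_u$ is u.c.p.\ with $\|\theta_u(c)\|_\infty\le\|c\|$, since positivity and unitality in the section space are verified pointwise.

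It remains to check the extension identity and equivariance. Combining the $\G$-equivariance of $\Phi$ with the reduction identity,
\[
\theta^g(\varphi_{r(g)}(b))=\theta^{d(g)}(\varphi_{d(g)}\beta_{g^{-1}}(b))=\psi^{d(g)}(\beta_{g^{-1}}(b))=\psi^g(b),
\]
which gives $\Theta\circ\Phi=\Psi$. For equivariance $\ell_h\circ\theta_{d(h)}=\theta_{r(h)}\circ\gamma_h$, evaluation at $x\in\G^{r(h)}$ and $c\in C_{d(h)}$ collapses both sides to $\theta^{d(x)}(\gamma_{x^{-1}h}(c))$, using only the cocycle identity $\gamma_{x^{-1}}\gamma_h=\gamma_{x^{-1}h}$ and the relation $(h^{-1}x)^{-1}=x^{-1}h$. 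The main obstacle to anticipate is precisely this equivariance: a naive extension of each $\psi_u$ individually through the injectivity of $\Gamma_\infty(\G^u,s^*\A)$ guaranteed by Lemma \ref{ll1} would destroy compatibility with the $\G$-action. The strategy of extending only the unit slices and restoring the off-unit data via the $\G$-action is what makes equivariance automatic.
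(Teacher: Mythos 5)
Your proposal is correct and follows essentially the same route as the paper's proof: the paper likewise restricts $\varphi_u(b)$ to its value at the unit (its $\widehat{\varphi}_u$, your $\psi^u$), extends these unit slices fiberwise using the injectivity of each $A_u$, and then propagates the extension over $\G^u$ by the formula $\psi_u(c)(g)=\widehat{\psi}_{d(g)}(g^{-1}\cdot c)$, exactly your $\theta_u(c)(g)=\theta^{d(g)}\bigl(\gamma_{g^{-1}}(c)\bigr)$, before checking the factorization and equivariance by the same computations. Your explicit statement of the reduction identity $\psi^g(b)=\psi^{d(g)}\bigl(\beta_{g^{-1}}(b)\bigr)$ only makes transparent what the paper uses implicitly in its chain of equalities.
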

\begin{proof}
Let $\Theta :\B\To\C$ be a  $\G$-injective morphism and let
 \[\varphi: \B\To(\G, \{ \Gamma_{\infty}(\G^u, s^*\A)\}_{u\in \G^{(0)}}, \ell) \]
 be a $\G$ -morphism. For any $u\in \Go$, define $\widehat{\varphi}_u : B_u\To \Gamma_{\infty}(\G^u, s^*\A)$ by $\widehat{\varphi}_u(b)=\varphi_u(a)(u)$. By the injectivity of $A$ ,   there exists a completely positive map $\widehat{\psi}_u : C_u \To A_u$ such that $\widehat{\psi}_u\circ \theta_u=\widehat{\varphi}_u$. Define a completely positive  map $\psi_u: C_u\To  \Gamma_{\infty}(\G^u, s^*\A)\}$ by $\psi_u(b)(g)=\widehat{\psi}_{d(g)}(g^{-1}.b)$.  For any $u\in \Go$  ,  $g\in \G^{u}$ and $b\in B^u$, we have
\begin{align*}
\psi_u\circ \theta_u(b)(g)&=\widehat{\psi}_{d(g)}(g^{-1}.\theta_u(b))\\
&=\widehat{\psi}_{d(g)}(\theta_{d(g)}(g^{-1}.b))=\widehat{\varphi}_{d(g)}(g^{-1}.b)\\
&=\varphi_{d(g)}(g^{-1}.b)(d(g))=(g^{-1}.\varphi_u)(d(g))\\
&=\varphi_u(b)(gd(g))=\varphi_u(b)(g).
\end{align*}
Thus $\{\psi_u\}_{u\in \G^{(0)}}\circ \theu= \phiu$.  We show that $\{\psi_u\}_{u\in \Go}$ is a $\G$-morphism:
\begin{align*}
g.\psi_{d(g)}(c)(h)&=\psi_{d(g)}(c)(g^{-1}h)\\
&=\widehat{\psi}_{d(h)}(h^{-1}.g.c)=\psi_{r(g)}(g.c)(h),
\end{align*}
for $g\in \G$ ,  $h\in \G^{r(g)}$ and $c\in C_{d(g)}$. Thus $g.\widehat{\varphi}_{d(g)}(c)=\widehat{\varphi}_{r(g)}(g.c)$.
\end{proof}
The next result follows from the above two lemmas and some routine algebraic manipulations.
\begin{prop}
Let $\A$ be groupoid dynamical system. Then $\A$ is  injective if and only if for each $u\in \Go$,  $A_u$ is an injective operator system and there exists a  $\G$-morphism $$\phiu: (\G, \{ \Gamma_{\infty}(\G^u, s^*\A)\}_{u\in\Go}, \ell)\To \mathcal{A}$$ such that  $\varphi_u\circ\im_u=\id_{A_u}$, for any $u\in \Go$.
\end{prop}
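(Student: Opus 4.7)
The plan is to prove both directions of the biconditional, leveraging Lemmas 2.6 and 2.7.

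For the ``if'' direction, assume each $A_u$ is an injective operator system and the retraction $\{\varphi_u\}$ exists. Lemma 2.6 then gives that each $\Gamma_{\infty}(\G^u, s^*\A)$ is injective as an operator system, so Lemma 2.7 yields that $(\G, \{\Gamma_{\infty}(\G^u, s^*\A)\}_{u\in\Go}, \ell)$ is $\G$-injective. The $\G$-equivariant embedding $\{\im_u\}$ together with $\{\varphi_u\}$ exhibits $\A$ as a $\G$-retract of this $\G$-injective system. The standard ``$\G$-retract of a $\G$-injective system is $\G$-injective'' diagram chase then closes the argument: for any injective $\G$-morphism $\Phi: \X \to \Y$ and $\G$-morphism $\Psi: \X \to \A$, lift $\{\im_u\} \circ \Psi$ along $\Phi$ to a $\G$-morphism $\Y \to (\G, \{\Gamma_{\infty}(\G^u, s^*\A)\}_{u\in\Go}, \ell)$ using $\G$-injectivity of the target, and then post-compose with $\{\varphi_u\}$ to obtain the required extension into $\A$.

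For the ``only if'' direction, assume $\A$ is $\G$-injective. Applying the defining property of $\G$-injectivity to the injective $\G$-morphism $\{\im_u\}: \A \to (\G, \{\Gamma_{\infty}(\G^u, s^*\A)\}_{u\in\Go}, \ell)$ (noted just before Lemma 2.7) and to the identity $\id_{\A}: \A \to \A$ produces directly the desired $\G$-morphism $\{\varphi_u\}$ with $\varphi_u \circ \im_u = \id_{A_u}$ for all $u \in \Go$.

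To establish that each $A_u$ is an injective operator system, I would pass through fiberwise injective envelopes. Let $W_u$ denote the operator-system injective envelope of $A_u$ and extend $\alpha$ to a $\G$-action $\omega$ on $\{W_u\}$ by the rigidity of injective envelopes, producing a $\G$-extension $\W = (\G, \{W_u\}, \omega)$ of $\A$. By Lemmas 2.6 and 2.7 applied to $\W$, the system $(\G, \{\Gamma_{\infty}(\G^u, s^*\W)\}_{u\in\Go}, \ell)$ is $\G$-injective, and the composition $\A \hookrightarrow \W \hookrightarrow (\G, \{\Gamma_{\infty}(\G^u, s^*\W)\}_{u\in\Go}, \ell)$ is an injective $\G$-morphism. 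Applying $\G$-injectivity of $\A$ to this composition and to $\id_{\A}$ yields a $\G$-morphism whose $u$-th fiber restricts to a cp retraction $W_u \to A_u$ of the inclusion $A_u \hookrightarrow W_u$. Hence $A_u$ is a cp retract of an injective operator system, and therefore injective itself. The main obstacle is exactly this last step: extending $\alpha$ from $\{A_u\}$ to $\{W_u\}$ requires a groupoid analogue of Hamana's rigidity of operator-system injective envelopes, which is the key structural input beyond the routine combination of Lemmas 2.6 and 2.7. A direct attempt to deduce injectivity of $A_u$ from $\varphi_u$ alone would be circular, because the injectivity of $\Gamma_{\infty}(\G^u, s^*\A)$ itself depends on the fiberwise injectivity one is trying to establish.
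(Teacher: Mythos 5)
Your proof is correct, and it is worth noting first that the paper offers no written proof of this proposition at all---it says only that the result ``follows from the above two lemmas and some routine algebraic manipulations''---so the comparison is with the argument its toolkit clearly intends. Your ``if'' direction (the $\G$-retract diagram chase through the $\G$-injective system $(\G,\{\Gamma_\infty(\G^u,s^*\A)\}_{u\in\Go},\ell)$) and your construction of $\{\varphi_u\}$ in the ``only if'' direction (apply $\G$-injectivity of $\A$ to the $\G$-injection $\{\im_u\}$ and to $\id_{\A}$) are exactly those routine manipulations. Where you genuinely diverge is in proving that each fiber $A_u$ is injective: you build the envelope system $(\G,\{I(A_u)\}_{u\in\Go},\omega)$ by extending each $\alpha_g$ to the operator-system injective envelopes via Hamana's rigidity theorem, and you rightly flag this as the nontrivial extra input (make explicit, if you keep this route, that it is the \emph{uniqueness} of the rigid extension that forces the cocycle identity $\omega_g\omega_h=\omega_{gh}$). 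The lighter route, and the one consistent with what the paper does in its later proofs (the minimal $\A$-seminorm lemma, the existence proof for $I_\G(\A)$, and the proposition producing the $C^*$-product on the fibers), is to fix a Hilbert space $H$ with $\bigoplus_{u\in\Go}A_u\subseteq B(H)$ and embed $\A$ into $(\G,\{\ell^\infty(\G^u,B(H))\}_{u\in\Go},\ell)$ by $J_u(a)(g)=g^{-1}\cdot a$: this $J$ is an injective $\G$-morphism by the same computation that works for $\{\im_u\}$, the ambient fibers $\ell^\infty(\G^u,B(H))$ are injective operator systems because $B(H)$ is (the first lemma with constant fibers), and applying $\G$-injectivity of $\A$ to $J$ and $\id_\A$ exhibits each $A_u$ as a u.c.p.\ retract of an injective operator system. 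This sidesteps, with no appeal to envelopes or rigidity, precisely the circularity you correctly identified: the injectivity of the ambient fibers comes for free rather than from the fibers of $\A$. So your argument buys nothing extra here and pays the price of importing Hamana's rigidity theorem; the $B(H)$-bundle embedding is both more elementary and more faithful to the paper.

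Two minor corrections. First, your detour through $\Gamma_\infty(\G^u,s^*\W)$ is unnecessary: applying $\G$-injectivity of $\A$ directly to the inclusion $\A\hookrightarrow\W$ already produces the retractions $W_u\to A_u$, and the $\G$-injectivity of the big section-space system is never actually used in your argument. Second, the hypothesis of the second lemma you invoke is injectivity of the \emph{fibers} $A_u$, not injectivity of the section spaces $\Gamma_\infty(\G^u,s^*\A)$; your opening sentence states the dependence backwards, though this is harmless in the ``if'' direction since fiber injectivity is assumed there.
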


We say that $\A$ is a $\G$-dynamical subsystem of $\B$, if for any $u\in \Go$, $A^u\subseteq B^u$, and for any $g\in \G$,
$\beta_g|_{A_{d(g)}}=\alpha_g$.
\begin{defn}
Let $\A$ be a $\G$ dynamical subsystem of $\B$.
\begin{enumerate}
\item  A $\G$-morphism $\Phi: \A\To \B$ is called an $\A$-projection if for any $u$, $\phi_u\circ \phi_u=\phi_u$ and $\phi_u|_{A_u}=id_{A_u}$,
\item A family $\pu$ of seminorms  is called an $\A$-seminorm if there exists a $\G$-morphism $\phiu$ such that,
for any $u$, $p_u( . )=\|\phi_u( .)\|$ and $\phi_u|_{A_u}=id_{A_u}$.
\end{enumerate}
\end{defn}
\begin{defn}
Let $\mathbf{P}$ (resp. $\mathbf{Pr}$)  be the set of  all $\A$-seminorms (resp., all $\A$-projections) on $\B$. We define  partial orders
  on $\mathbf{P}$ and $\mathbf{Pr}$ as follows:
$\pu \leq \qu $,  if for any $u\in \Go$  and $b\in B_u$, $p_u(b)\leq q_u(b)$, and
$\phiu\preceq \psiu$,  if for any $u\in \Go$, $ \psi_u\circ\phi_u=\phi_u\circ\psi_u=\phi_u$.
\end{defn}

The next lemma extends \cite[Lemma 3.4]{ham2}.

\begin{lem} \label{l1}
Let $\A$ be a $\G$-dynamical subsystem of an injective groupoid dynamical system $\B$. Then there exists a minimal $\A$-seminorm  on $\B$.
\end{lem}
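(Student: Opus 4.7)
The plan is to apply Zorn's Lemma to $(\mathbf{P}, \leq)$. The poset is non-empty: the identity family $\{\id_{B_u}\}_{u \in \Go}$ is a $\G$-morphism fixing each $A_u$ pointwise, so the norm itself gives an $\A$-seminorm. The content of the lemma is therefore that every totally ordered decreasing chain $\{p^{(i)}\}_{i \in I}$ in $\mathbf{P}$ admits a lower bound.

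First I would choose, for each $i$, a $\G$-morphism $\Phi^{(i)} = \{\phi^{(i)}_u\}_{u\in \Go}$ with $\phi^{(i)}_u|_{A_u} = \id_{A_u}$ and $p^{(i)}_u(\,\cdot\,) = \|\phi^{(i)}_u(\,\cdot\,)\|$. By the preceding proposition, $\G$-injectivity of $\B$ forces each fiber $B_u$ to be an injective operator system, and Choi--Effros realizes $B_u$ as a weak*-closed unital self-adjoint subspace of some $B(H_u)$. The set $UCP(B_u, B(H_u))$ of unital completely positive maps is compact in the point-weak* (BW) topology, so by Tychonoff the product $\prod_{u \in \Go} UCP(B_u, B(H_u))$ is compact. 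I then extract a convergent subnet $(\Phi^{(i_\lambda)})_\lambda$ with limit $\Phi = \{\phi_u\}_{u\in \Go}$ in this product.

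Next I would verify that $\Phi$ is a $\G$-morphism $\B \to \B$ inducing a lower bound for the chain. Each $\phi_u$ lands in the BW-closure of $B_u$ in $B(H_u)$, which equals $B_u$ itself; the identities $\phi^{(i_\lambda)}_u|_{A_u} = \id_{A_u}$ pass to the pointwise limit; and for the intertwining with $\beta_g$ I use that $\beta_g$ is a complete order isomorphism between injective operator systems, hence a $*$-isomorphism with respect to the canonical Choi--Effros $C^*$-structures and therefore automatically normal, so BW-continuous. Thus $\phi^{(i_\lambda)}_{r(g)} \circ \beta_g = \beta_g \circ \phi^{(i_\lambda)}_{d(g)}$ descends to the limit, and $p_u(\,\cdot\,) := \|\phi_u(\,\cdot\,)\|$ is an $\A$-seminorm. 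For any fixed $j \in I$ the subnet is cofinally below $p^{(j)}$, and weak*-lower semicontinuity of the norm yields $p_u(b) \leq \liminf_\lambda \|\phi^{(i_\lambda)}_u(b)\| \leq p^{(j)}_u(b)$, so $p \leq p^{(j)}$ for all $j$. Zorn's Lemma then delivers a minimal $\A$-seminorm.

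The main obstacle is extracting a single limit morphism from a chain indexed by an arbitrary directed set while both preserving $\G$-equivariance and keeping the target inside $\B$ rather than the ambient $B(H_u)$'s. Both are absorbed by the Choi--Effros structure theorem (weak*-closedness of $B_u$ in $B(H_u)$, canonical $C^*$-structure on $B_u$) combined with automatic normality of complete order isomorphisms between injective operator systems; after this, Tychonoff compactness yields the required simultaneous limit across all $u \in \Go$.
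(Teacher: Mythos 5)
Your Zorn reduction and the idea of extracting a point-weak* limit are in the spirit of the paper, but the compactness step has two genuine gaps, both coming from treating the fibers $B_u$ as if they were von Neumann algebras. First, Choi--Effros does \emph{not} realize an injective operator system as a weak*-closed subspace of some $B(H_u)$: injectivity makes $B_u$ (with the Choi--Effros product) a monotone complete $C^*$-algebra, but such algebras need not be dual spaces. For instance, the injective envelope of $C[0,1]$ is the Dixmier algebra of bounded Borel functions modulo meager sets, a commutative injective $C^*$-algebra that is not a von Neumann algebra; a weak*-closed unital self-adjoint subspace of $B(H_u)$ is automatically a dual space, so no such representation of $B_u$ exists in general. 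Consequently your BW-limit $\phi_u$ only takes values in the weak*-closure of $B_u$ inside $B(H_u)$, which can be strictly larger than $B_u$, and your limiting family need not map $\B$ into $\B$ at all. Second, the equivariance argument conflates normality with weak*-continuity: a $*$-isomorphism between monotone complete $C^*$-algebras is order-continuous, but since $B_u$ has no predual this does not make $\beta_g$ continuous for the relative weak* topologies of $B(H_{d(g)})$ and $B(H_{r(g)})$, so the identities $\phi^{(i_\lambda)}_{r(g)}\circ\beta_g=\beta_g\circ\phi^{(i_\lambda)}_{d(g)}$ have no reason to survive passage to the BW-limit.

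The paper circumvents both problems by taking the limit in a genuine dual space and only afterwards returning to $\B$, equivariantly. It embeds $\B$ into the induced system $(\G,\{\ell^{\infty}(\G^u,B(H))\}_{u\in\Go},\ell)$ via $J_u(b)(g)=g^{-1}\cdot b$, where $H$ is one Hilbert space containing $\oplus_u B_u$ and the action is by translation; since $\ell^{\infty}(\G,B(H))$ is a dual space on which evaluation at each point of $\G$ is weak*-continuous, a point-weak* cluster point $\varphi_0$ of the net $\{\varphi_i\}$ exists and remains equivariant by a direct computation. The values of $\varphi_0$ lie in $\ell^{\infty}(\G,B(H))$ rather than in $\B$, and this is repaired not fiberwise but globally: by the $\G$-injectivity of $\B$ there is a $\G$-morphism $\Psi$ from the induced system onto $\B$ with $\psi_u\circ J_u=\mathrm{id}_{B_u}$, and $\Phi=\Psi\circ\varphi_0$ is then an equivariant family fixing $\A$ whose seminorms bound the chain from below (using contractivity of $\Psi$ and lower semicontinuity of the norm, as in your last step). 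If you tried to rescue your fiberwise argument by composing with projections $E_u:B(H_u)\To B_u$, equivariance would break; the equivariant projection supplied by $\G$-injectivity of $\B$ from the induced system is exactly the ingredient your proposal is missing.
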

\begin{proof}
Since $\{id_{B^u}\}_{u\in \Go}$ induces an $\A$-seminorm on  $\B$, by Zorn lemma, it is enough to show that every decreasing net of $\A$-seminorms on $\B$ has a lower bound.
Suppose that $\{\{p_{i, u}\}_{u\in \Go}\}_{i\in I}$ is such a decreasing net. For any $i\in I$, there exists a $\G$-morphism $\{\varphi_{i, u}\}_{u\in \Go}$ such that,  for any $u\in \Go$, $p_{i, u}(. )=\|\varphi_{i, u}( .)\|$ and $\varphi_{i, u}|_{A^u}=id_{A^u}.$ Put $B=\oplus_{u\in \Go} B_u$ and let $H$ be a Hilbert space such that $B\subseteq B(H)$.
Define $J: B\To \ell^{\infty}(\G, B(H))$ by $J((b_u)_{u\in \Go})(g)=(\tilde{b}_u)_{u\in \Go}$, where
\[\tilde{b}_u=\left\{
\begin{array}{lcr}
b_u&& u\neq d(g)\\
&&\\
g^{-1}\cdot b_{r(g)}&& u=d(g).
\end{array}
\right.\]
Then $J$ is an imbedding, and we may regard $B$ as an operator subsystem  of $\ell^{\infty}(\G, B(H))$. The restriction of $J$ to $B^u$ is the imbedding $J_u: B^u\To \ell^{\infty}(\G^u, B(H));\ J_u(b)(g)=g^{-1}\cdot b.$   For any $i\in I$,  define $\varphi_i: B\To B\subseteq \ell^{\infty}(\G, B(H))$ by
\[\varphi_i(b_u)_{u\in \Go}=(\varphi_{i, u}(b_u))_{u\in \Go}.\]
Then $\{\varphi_i\}_{i\in I}$  is a net in the unit ball of $B(B, \ell^{\infty}(\G, B(H)))$, which is compact in the point-weak$^*$ topology, thus there exists a subnet $\{\varphi_j\}_{j\in I'}$, point-weak$^*$-converging to some $\varphi_0$ in $B(B, \ell^{\infty}(\G, B(H)))$, that is,
for any $v\in \Go$ and $b\in B^v$,
\[\varphi_{j, v}(b)\To \varphi_0((\hat{b}_u(b))_{u\in \Go})(v),\]
where
\[\hat{b}_u(b)=\left\{
\begin{array}{lcr}
b&&u=v\\
&&\\
0&&u\neq v.
\end{array}
\right.\]
By the  injectivity of $\B$,  there  exists a $\G$-morphism  \[\Psi:( \G, \ell^{\infty}(G^u, B(H)), \ell)\To\B\] such that for any $u\in \Go$, $\psi_u\circ J_u=id_{B_u}$. Define \[\varphi_{0, v}:B_v\To \ell^{\infty}(G^v, B(H))\] by $\varphi_{0, v}(b)=\varphi_0((\hat{b}_u(b))_{u\in \Go})|_{\G^v}$ and
\[\varphi_{v}=\psi_v\circ \varphi_{0, v}.\]
Then $\varphi_v$ is a c.p. map from $B_v$ into $B_v$ and $\varphi_v|_{B_v}=id_{B_v}$. Let us observe  that $\Phi$ is a $\G$-morphism. Since $\Psi$ is a $\G$-morphism, it is enough to show that $\{\varphi_{0, u}\}_{u\in \Go}$ is a family of $\G$-morphisms.
To see this, suppose  that  $b\in  B_{d(g)}$, for some $g\in \G$. If $h\in G^{r(g)}$, then
\begin{align*}
g\cdot \varphi_{0, d(g)}&(b)(h)=\varphi_0((\hat{b}_u(b))_{u\in \Go})(g^{-1}h)\\
&=\lim_j \varphi_j ((\hat{b}_u(b))_{u\in \Go})(g^{-1}h)
=\lim_j  \varphi_{j, d(g)}(b)(g^{-1}h)\\
&=\lim_j h^{-1}\cdot g\cdot  \varphi_{j, d(g)}(b)
=\lim_j h^{-1}\cdot  \varphi_{j, r(g)}(g\cdot b)\\
&=h^{-1}\cdot \varphi_{0}((\hat{b}_u(g.b))_{u\in \Go})(r(g))
=\varphi_{0}((\hat{b}_u(g\cdot b))_{u\in \Go})(hr(g))\\
&=\varphi_{0}((\hat{b}_u(g\cdot b))_{u\in \Go})(h)=\varphi_{0, r(g)}(g\cdot b)(h).
\end{align*}
 Put $p_u(\cdot)=\| \varphi_u(\cdot)\|$, tnen $\{p_u\}_{u\in \Go}$ is a lower bound for $\{\{p_{i, u}\}_{u\in \Go}\}_{i\in I}$. For $b\in B^v$,
\begin{align*}
p_v(b)=\|\varphi_v(b)\|&=\|\psi_v\circ\varphi_{0, v}(b)\|\leq \|\varphi_{0, v}(b)\|\\
&=\|\varphi_{0}((\hat{b}_u(b))_{u\in \Go})\|\leq \limsup_j \|\varphi_{j}((\hat{b}_u(b))_{u\in \Go})\|\\
&=\limsup_j \| \varphi_{j, v}(b)\|
=\lim_{i}p_{i, v}(b).
\end{align*}
\end{proof}

Now we are able to extend \cite[Lemma 3.5]{ham2}.
\begin{thm} \label{t1}
Let $\A$ be a  $\G$-dynamical subsystem of $\B$ and $\B$ is $\G$-injective. Then there is a minimal $\A$-projection on $\B$.
\end{thm}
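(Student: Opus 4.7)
The plan is to combine the minimal $\A$-seminorm from Lemma~\ref{l1} with a compact-semigroup fixed-point argument, in the spirit of Hamana's proof of \cite[Lemma 3.5]{ham2}.

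First, apply Lemma~\ref{l1} to obtain a minimal $\A$-seminorm $p=\{p_u\}_{u\in\Go}$ on $\B$, realized as $p_u(\cdot)=\|\phi_u(\cdot)\|$ for some $\G$-morphism $\Phi=\{\phi_u\}_{u\in\Go}$ with $\phi_u|_{A_u}=\id_{A_u}$. Consider the set $\mathcal{K}$ of all $\G$-morphisms $\Psi:\B\to\B$ satisfying $\psi_u|_{A_u}=\id_{A_u}$ and $\|\psi_u(b)\|\leq p_u(b)$ for all $u\in\Go$ and $b\in B_u$. Using the embedding into $\ell^\infty(\G, B(H))$ and the Banach--Alaoglu argument from the proof of Lemma~\ref{l1}, the set $\mathcal{K}$ is compact in the point-weak-$*$ topology, and it is nonempty since $\Phi\in\mathcal{K}$.

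Next, I would observe that $\mathcal{K}$ is closed under composition: for $\Psi,\Psi'\in\mathcal{K}$, contractivity of unital c.p.\ maps yields $\|\psi_u\circ\psi'_u(b)\|\leq\|\psi'_u(b)\|\leq p_u(b)$, and both the identity property on $\A$ and the $\G$-equivariance pass through composition. Right composition by a fixed $T\in\mathcal{K}$ is continuous in the point-weak-$*$ topology, since $\Psi_\alpha(T(b))\to\Psi(T(b))$ weak-$*$ whenever $\Psi_\alpha\to\Psi$ pointwise weak-$*$. Thus $\mathcal{K}$ is a compact right-topological semigroup, and Ellis's theorem supplies an idempotent $\Phi_0=\{\phi_{0,u}\}\in\mathcal{K}$, which is the desired $\A$-projection with $\|\phi_{0,u}(\cdot)\|\leq p_u(\cdot)$.

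Finally, I would verify $\Phi_0$ is minimal in $(\mathbf{Pr},\preceq)$. Given an $\A$-projection $\Psi\preceq\Phi_0$, the relations $\Psi\circ\Phi_0=\Phi_0\circ\Psi=\Psi$ yield $\|\psi_u(b)\|=\|\psi_u(\phi_{0,u}(b))\|\leq\|\phi_{0,u}(b)\|\leq p_u(b)$, so $\|\Psi\|$ is an $\A$-seminorm dominated by $p$, hence equal to $p$ by minimality. A rigidity argument---using that $\Psi$ factors as a c.p.\ idempotent on the injective range $\Phi_0(\B)$ and that c.p.\ retractions extending $\id_\A$ are automatically $\A$-bimodular---then forces $\Psi=\Phi_0$. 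The main obstacle is precisely this final rigidity step: promoting the seminorm equality $\|\Psi\|=\|\Phi_0\|=p$ to the pointwise equality of the morphisms. I expect this to require the Choi--Effros product structure on the injective range $\Phi_0(\B)$ together with the uniqueness of minimal $\A$-bimodular contractions---the groupoid analogue of the standard rigidity of minimal seminorms used in \cite[Lemma 3.5]{ham2}.
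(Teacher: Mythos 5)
Your reduction to Lemma~\ref{l1} and your final ordering computation are in the right spirit, but the central step---invoking Ellis--Namakura to produce an idempotent in $\mathcal{K}$---has a genuine gap: $\mathcal{K}$ is never shown to be compact, and the Banach--Alaoglu argument of Lemma~\ref{l1} does not give this. Each $B_u$ is merely an operator system sitting in $B(H)$, not a weak$^*$-closed subspace, so a point-weak$^*$ limit of maps with values in $B_u$ need only take values in the weak$^*$-closure of $B_u$; moreover the $\G$-equivariance constraint involves the isomorphisms $\beta_g$, which are not weak$^*$-continuous in general, so it too fails to pass to such limits. What the proof of Lemma~\ref{l1} actually does is take a cluster point $\varphi_0$ with values in the von Neumann algebra $\ell^\infty(\G,B(H))$ and then \emph{correct} it, composing with a projection $\Psi:(\G,\{\ell^\infty(\G^u,B(H))\}_{u\in\Go},\ell)\to\B$ supplied by $\G$-injectivity; the corrected map $\psi_v\circ\varphi_{0,v}$ lies in $\mathcal{K}$ but is in general not a limit of the original net. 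That suffices to produce a lower bound for Zorn's lemma, but it does not make $\mathcal{K}$ closed in any compact space of maps, so $\mathcal{K}$ is not a compact right-topological semigroup and Ellis's theorem cannot be applied as stated. (Working instead with self-maps of the dual objects $\ell^\infty(\G^u,B(H))$ would restore compactness, but then the semigroup no longer consists of self-maps of $\B$, and your minimality argument, which takes place on $\B$, no longer applies.)

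Ironically, the step you flag as the main obstacle is the easy part, and it needs no Choi--Effros structure: once minimality of $p$ forces $\|\psi_u(\cdot)\|=\|\phi_{0,u}(\cdot)\|=p_u(\cdot)$, the kernels coincide; since $\psi_u$ is idempotent, $b-\psi_u(b)\in\ker\psi_u=\ker\phi_{0,u}$, whence $\phi_{0,u}(b)=\phi_{0,u}(\psi_u(b))=\psi_u(b)$ using $\phi_{0,u}\circ\psi_u=\psi_u$. This kernel trick is exactly how the paper concludes. The paper also avoids any fixed-point theorem: it takes the morphism $\widetilde{\Phi}$ realizing the minimal seminorm $\tilde{p}$, forms the Ces\`aro averages $\varphi_u^{(n)}=\frac{1}{n}(\widetilde{\varphi}_u+\widetilde{\varphi}_u^2+\cdots+\widetilde{\varphi}_u^n)$, extracts a corrected cluster point as in Lemma~\ref{l1}, and uses minimality of $\tilde{p}$ to force $\limsup_j\|\varphi_u^{(n_j)}(b)\|=\|\widetilde{\varphi}_u(b)\|$ for every $b$; applying this with $b=x-\widetilde{\varphi}_u(x)$ and telescoping, $\varphi_u^{(n)}(x-\widetilde{\varphi}_u(x))=\frac{1}{n}(\widetilde{\varphi}_u(x)-\widetilde{\varphi}_u^{n+1}(x))\to 0$, so $\widetilde{\varphi}_u^2=\widetilde{\varphi}_u$. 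In other words, no idempotent has to be manufactured: the map realizing the minimal seminorm is automatically a projection, and this is precisely the device that sidesteps the compactness problem your approach runs into.
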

\begin{proof}
By Lemma \ref{l1}, there exists a minimal $\A$-seminorm $\{\tilde{p}_u\}_{u\in \Go}$. Thus,  for any $u\in \Go$,  the exists $\widetilde{\varphi}_u: B_u\To B_u$ such that $\widetilde{\varphi}_u|_{A_u}=id_{A_u}$ and $\tilde{p}_u(. )=\|\widetilde{\varphi}_u(. )\|$. Let
\[\varphi^{(n)}_u=\frac{1}{n}(\widetilde{\varphi}_u+\widetilde{\varphi_u}^2+\cdots+\widetilde{\varphi_u}^n).\]
Then $\{\{ \varphi_u^{(n)}\}_{u\in \Go}\}_{n\in \mathbb{N}}$ is a net of $\G$-morphisms from $\B$ into itself. A similar argument as in the proof of Lemma \ref{l1} shows that there exist  a subnet $\{\{ \varphi_u^{(n_j)}\}_{u\in \Go}\}_{j\in \mathbb{N}}$  and a $\G$-morphism
$\Phi$ such that $\varphi_u^{(n_j)}(b)\To \varphi_u(b)$, for all $u\in \Go$  and $b\in B_u$,    in the weak$^*$-topology. Take a  $\G$-morphism $\Psi$  which is an idempotent from $\B$ into $(\G, \ell^{\infty}(\G^u, B(H)),\ell)$, where $H$ is a Hilbert space with  $\oplus_u B_u\subseteq B(H)$.  For $u\in\Go$,
\begin{align*}
\|\psi_u\circ\varphi_u(b)\|&\leq \|\varphi_u(b)\|\leq\limsup_j \|\varphi_u^{(n_j)}(b)\|\leq \|\widetilde{\varphi}_u(b)\|=\tilde{p}_u(b).
\end{align*}
By the minimality of $\{\tilde{p}_u\}_{u\in \Go}$,  $\|\psi_u\circ\varphi_u(b)\|=\tilde{p}_u(b)$, thus
\[\limsup_j \|\varphi_u^{(n_j)}(b)\|
= \|\widetilde{\varphi}_u(b)\|.\]
Therefore,
\begin{align*}
\|\widetilde{\varphi}_u(x)-\widetilde{\varphi}_u^2(x)\|
&=\|\widetilde{\varphi}_u(x-\widetilde{\varphi}_u)\|\\
&=\limsup\|\varphi^{(n_j)}(x-\widetilde{\varphi}_u(x)\|\\
&=\limsup\frac{1}{n}\|\widetilde{\varphi}
(x)-\widetilde{\varphi}_{u}^{~n_j+1}(x)\|=0.
\end{align*}
Hence $\widetilde{\Phi}=\{\widetilde{\varphi}_u\}_{u\in \Go}$ is an $\A$-projection.
To see the minimality of $\widetilde{\Phi}$,  suppose that $\Theta$ is any $\A$-projection with
$\Theta\preceq \widetilde{\Phi}$. Then, for $u\in \Go$,
$\theta_u\circ \widetilde{\varphi}_u=\widetilde{\varphi}_u\circ\theta_u=\theta_u$. Thus
$\|\theta_u(b)\|\leq \|\widetilde{\varphi}_u(b)\|=\tilde{p}_u(b)$. The minimality of
$\{p_u\}_{u\in \Go}$ implies that, for any $u$, $\|\theta_u(b)\|=\|\widetilde{\varphi}_u(b)\|$, in particular,
$\ker \theta_u=\ker \widetilde{\varphi}_u$. For $b\in B_u$,
\[\widetilde{\varphi}_u(b)=\widetilde{\varphi}_u((b-\theta_u(b))+\theta_u(b))=\widetilde{\varphi}(
\theta_u(b))=\theta_u(b).\]
\end{proof}

The two next lemmas are proved  similar to Lemma \ref{t1} and   \cite[Lemma 3.6]{ham2}.
\begin{lem}\label{l3}
Let $\A$ be a groupoid $\G$-dynamical  subsystem of $\B$ and $\Phi: \B\To \B$  be a $\G$-morphism which induces  a minimal $\A$-seminorm. Then the extension
\[\mathcal{IM}(\Phi)=(( \G , \{\varphi_u(B_u)\}_{u\in \Go}, \beta ),\{i_u\}_{u\in \Go}), \] is $\G$-rigid, where $i_u: A_u\To \varphi_u(B_u)$ is inclusion map.
\end{lem}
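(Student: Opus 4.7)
The plan is to exploit the extra structure of $\Phi$ revealed by the proof of Theorem \ref{t1}: any $\G$-morphism inducing a minimal $\A$-seminorm is automatically an $\A$-projection (so $\varphi_u^2=\varphi_u$ and $\varphi_u|_{A_u}=\id$) and is $\preceq$-minimal among $\A$-projections. Rigidity will follow by pulling back any endomorphism $\Phi'$ of $\mathcal{IM}(\Phi)$ fixing $\A$ to an $\A$-projection on $\B$ lying below $\Phi$ in the $\preceq$-order, forcing it to coincide with $\Phi$ and hence $\Phi'$ to be the identity on the image.

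Concretely, let $\Phi'=\{\varphi'_u\}$ be a $\G$-morphism from $\mathcal{IM}(\Phi)$ to itself with $\varphi'_u\circ i_u=i_u$ for each $u$. Define $\tilde\varphi_u:=\varphi'_u\circ\varphi_u:B_u\to B_u$ using the inclusion $\varphi_u(B_u)\subseteq B_u$. The $\G$-equivariance of $\Phi$ and $\Phi'$, together with their fixing of $\A$, shows that $\tilde\Phi=\{\tilde\varphi_u\}$ is a $\G$-morphism of $\B$ that fixes $\A$; idempotence of $\Phi$ and $\varphi'_u(\varphi_u(B_u))\subseteq\varphi_u(B_u)$ give $\tilde\Phi^k=(\Phi')^k\circ\Phi$ for every $k\ge 1$. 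Form the Cesaro averages
\[
M_n \;:=\; \frac{1}{n}\sum_{k=1}^n \tilde\Phi^k \;=\; \Bigl\{\, \frac{1}{n}\sum_{k=1}^n (\varphi'_u)^k \,\Bigr\}_{u\in\Go}\!\circ \Phi,
\]
and, exactly as in the compactness argument of Lemma \ref{l1} and the proof of Theorem \ref{t1}, embed $\B$ into the injective system $(\G,\{\ell^\infty(\G^u,B(H))\},\ell)$ and extract a point-weak$^*$ cluster point $\Phi_\infty=\Phi'_\infty\circ\Phi$, where $\Phi'_\infty$ is a cluster point of the Cesaro iterates of $\Phi'$ on $\mathcal{IM}(\Phi)$.

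Next I would verify that $\Phi_\infty$ is an $\A$-projection lying below $\Phi$ in the $\preceq$-order. The standard Cesaro identity in the weak$^*$ limit gives $\varphi'_u\circ\Phi'_{\infty,u}=\Phi'_{\infty,u}$, hence $(\Phi'_{\infty,u})^k=\Phi'_{\infty,u}$ for all $k$, and a further Cesaro passage yields $(\Phi'_{\infty,u})^2=\Phi'_{\infty,u}$; combined with $\varphi_u^2=\varphi_u$, this gives $\Phi_\infty^2=\Phi_\infty$. Since $\Phi'_{\infty,u}$ takes values in $\varphi_u(B_u)$, on which $\varphi_u$ acts as the identity, a direct computation yields
\[
\varphi_u\circ\Phi_{\infty,u} \;=\; \Phi_{\infty,u}\circ\varphi_u \;=\; \Phi_{\infty,u},
\]
i.e., $\Phi_\infty\preceq\Phi$. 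The $\preceq$-minimality of $\Phi$ from Theorem \ref{t1} forces $\Phi_\infty=\Phi$, hence $\Phi'_{\infty,u}=\id_{\varphi_u(B_u)}$; substituting back into $\varphi'_u\circ\Phi'_{\infty,u}=\Phi'_{\infty,u}$ yields $\varphi'_u=\id_{\varphi_u(B_u)}$, as required.

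The main obstacle is the cluster-point extraction together with the verification that $\Phi_\infty$ remains a $\G$-morphism of $\B$ (rather than merely a map into the larger injective system) and that the Cesaro identities survive the passage to the weak$^*$ limit; this is handled exactly as in the proof of Theorem \ref{t1}, using the injectivity of the ambient system. Once that is in place, the argument collapses to the short chain of identities above.
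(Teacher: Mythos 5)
Your overall skeleton---upgrade $\Phi$ to an $\A$-projection via Theorem \ref{t1}, compose an endomorphism $\Phi'$ of $\mathcal{IM}(\Phi)$ with $\Phi$, form Cesaro averages, and invoke minimality---is the right one, but the execution through weak$^*$ cluster points breaks down at exactly the steps you call ``standard''. First, the identity $\varphi'_u\circ\Phi'_{\infty,u}=\Phi'_{\infty,u}$ does not follow from Cesaro telescoping: to move $\varphi'_u$ \emph{outside} a point-weak$^*$ limit you need $\varphi'_u$ to be weak$^*$-continuous (normal), and a u.c.p.\ map on an operator system need not be normal. Telescoping only gives the precomposition identity $\Phi'_{\infty,u}\circ\varphi'_u=\Phi'_{\infty,u}$, where $\varphi'_u$ sits \emph{inside} the limit; the same obstruction blocks your ``further Cesaro passage'' to $(\Phi'_{\infty,u})^2=\Phi'_{\infty,u}$. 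Second, the claim that $\Phi'_{\infty,u}$ takes values in $\varphi_u(B_u)$ is unjustified: $\varphi_u(B_u)$ is norm-closed but in general not weak$^*$-closed, and in fact a cluster point a priori takes values only in the ambient injective system $\ell^{\infty}(\G^u,B(H))$. That is precisely why the proof of Theorem \ref{t1} must compose with the projection $\Psi$ to land back in $\B$---but after composing with $\Psi$ all the algebraic identities you rely on ($\Phi_\infty=\Phi'_\infty\circ\Phi$, idempotence of $\Phi_\infty$, $\Phi_\infty\preceq\Phi$) are lost; note that the paper, after that composition, only ever uses \emph{norm} inequalities, never operator identities.

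This is why the intended proof (the paper defers to the technique of Theorem \ref{t1} and \cite[Lemma 3.6]{ham2}) never extracts a cluster point here: everything is done at the level of seminorms, where contractivity and lower semicontinuity of the norm are available. Concretely: by Theorem \ref{t1}, $\Phi$ is idempotent, so $\varphi_u$ is the identity on $\varphi_u(B_u)$. Given $\Phi'=\{\varphi'_u\}$ with $\varphi'_u\circ i_u=i_u$, your maps $M_n$, i.e.\ $\sigma_{n,u}=\frac{1}{n}\sum_{k=1}^n(\varphi'_u)^k\circ\varphi_u$, form for each fixed $n$ a $\G$-morphism of $\B$ fixing $\A$, so $\|\sigma_{n,u}(\cdot)\|$ is an $\A$-seminorm dominated by $\|\varphi_u(\cdot)\|$ (u.c.p.\ maps are contractive); minimality then gives $\|\sigma_{n,u}(z)\|=\|\varphi_u(z)\|$ for every $z\in B_u$ and every $n$---no limit is ever taken. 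Now fix $w\in\varphi_u(B_u)$ and put $y=w-\varphi'_u(w)\in\varphi_u(B_u)$, so that $\varphi_u(y)=y$. Telescoping,
\[
\sigma_{n,u}(y)=\frac{1}{n}\sum_{k=1}^n\bigl((\varphi'_u)^k(w)-(\varphi'_u)^{k+1}(w)\bigr)
=\frac{1}{n}\bigl(\varphi'_u(w)-(\varphi'_u)^{n+1}(w)\bigr),
\]
whence $\|y\|=\|\varphi_u(y)\|=\|\sigma_{n,u}(y)\|\le 2\|w\|/n\to 0$, i.e.\ $\varphi'_u(w)=w$ and $\Phi'$ is the identity. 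So your conclusion is recoverable, but only by exploiting minimality through norms rather than through operator identities in a weak$^*$ limit; as written, your argument has a genuine gap at each place a map is interchanged with a weak$^*$ limit.
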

\begin{lem}\label{l4}
Let $(\B, \Phi)$ be a $\G$-injective $\G$-extension of $\A$. Then $(\B, \Phi)$  is $\G$-rigid if and only if it is $\G$-essential.
\end{lem}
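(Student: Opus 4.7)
The plan is to prove the two implications separately, transposing the scheme of Hamana's classical \cite[Lemma 3.6]{ham2} to the groupoid setting. For the direction $\G$-rigid $\Rightarrow$ $\G$-essential, let $\Theta : \B \to \C$ be a $\G$-morphism with $\Theta \circ \Phi$ a $\G$-injection. Then $\Theta \circ \Phi : \A \to \C$ is a $\G$-embedding, so the $\G$-injectivity of $\B$ applied to the given $\G$-morphism $\Phi : \A \to \B$ produces a $\G$-morphism $\Sigma : \C \to \B$ with $\Sigma \circ \Theta \circ \Phi = \Phi$. The composition $\Sigma \circ \Theta : \B \to \B$ is then a $\G$-morphism satisfying $(\Sigma \circ \Theta) \circ \Phi = \Phi$, so $\G$-rigidity forces $\Sigma \circ \Theta = \id_\B$, and hence each $\theta_u$ is injective.

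The converse is more delicate. Let $\Psi : \B \to \B$ be a $\G$-morphism with $\Psi \circ \Phi = \Phi$, and form the Cesaro averages
\[ \Psi^{(n)} := \tfrac{1}{n}\sum_{k=1}^{n} \Psi^{k}, \]
each of which is a UCP $\G$-morphism extending $\Phi$. As in the proof of Lemma \ref{l1}, I would embed $\B$ into $(\G, \{\ell^{\infty}(\G^{u}, B(H))\}_{u}, \ell)$ via the standard map $J$, extract a point-weak-$*$ convergent subnet $\Psi^{(n_{j})} \to \Psi_{0}$ in the unit ball of $B(\B, \ell^{\infty}(\G, B(H)))$, and then use the $\G$-injectivity of $\B$ to obtain a $\G$-morphism $\Xi$ retracting $J$. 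The composition $\widehat{\Psi} := \Xi \circ \Psi_{0} : \B \to \B$ will be a $\G$-morphism satisfying $\widehat{\Psi} \circ \Phi = \Phi$ (since every $\Psi^{(n)}$ already satisfies this), so $\G$-essentiality will hand us that each $\widehat{\psi}_{u}$ is injective.

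The endgame is then a telescoping computation: for each $u \in \Go$ and $x \in B_{u}$,
\[ \Psi^{(n)}_{u}\bigl(x - \psi_{u}(x)\bigr) = \tfrac{1}{n}\bigl(\psi_{u}(x) - \psi_{u}^{n+1}(x)\bigr), \]
which tends to zero in norm because the UCP maps $\psi_{u}^{k}$ are contractive. Passing to the point-weak-$*$ limit along the subnet and composing through $\Xi$ yields $\widehat{\psi}_{u}(x - \psi_{u}(x)) = 0$, and injectivity of $\widehat{\psi}_{u}$ then forces $\psi_{u}(x) = x$ for every $x$ and every $u$, i.e. $\Psi = \id_{\B}$. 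The main obstacle is the careful bookkeeping needed to set up the weak-$*$ compactness and the $\G$-injective retraction so that $\widehat{\Psi}$ is a well-defined $\G$-morphism $\B \to \B$ inheriting the identity on $\Phi(\A)$; fortunately this is essentially the same machinery already assembled in the proofs of Lemma \ref{l1} and Theorem \ref{t1}, so it should transfer with only minor notational changes.
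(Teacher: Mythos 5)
Your proposal is correct and takes essentially the same approach as the paper: the authors do not write out a proof of Lemma~\ref{l4} at all, remarking only that it ``is proved similar to Lemma~\ref{t1} and \cite[Lemma 3.6]{ham2}'', and your argument is precisely that intended transposition of Hamana's proof. Both of your directions --- the retraction obtained from $\G$-injectivity combined with $\G$-rigidity for ``rigid $\Rightarrow$ essential'', and the Ces\`aro averages, point-weak$^*$ cluster point, injective retraction of the embedding $J$, and telescoping identity for ``essential $\Rightarrow$ rigid'' --- reuse exactly the machinery the paper assembles in Lemma~\ref{l1} and Theorem~\ref{t1}, as the authors indicate.
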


\begin{lem}\label{l5}
Let $\A$ be an injective groupoid dynamical system and $\Phi$ be an idempotent $\G$-morphism of $\A$. Then $(\G, \{\varphi_u(A^u)\}_{u\in \Go}, \alpha)$ is injective.
\end{lem}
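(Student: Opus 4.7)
The plan is to mimic the classical argument that the range of an idempotent unital completely positive map on an injective operator system is injective, adapted to the $\G$-equivariant setting.

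First I would check that $\A' := (\G,\{\varphi_u(A_u)\}_{u\in\Go},\alpha)$ is well-defined as a groupoid dynamical system. Each $\varphi_u(A_u)$ is a unital, self-adjoint, closed subspace of $A_u$ (unitality and self-adjointness come from $\Phi$ being a unital c.p.\ map, and closedness from idempotency), hence an operator system. The restriction of $\alpha_g$ to $\varphi_{d(g)}(A_{d(g)})$ takes values in $\varphi_{r(g)}(A_{r(g)})$ because $\Phi$ is $\G$-equivariant:
\[
\alpha_g(\varphi_{d(g)}(a))=\varphi_{r(g)}(\alpha_g(a))\in \varphi_{r(g)}(A_{r(g)}),
\]
and symmetrically for $g^{-1}$, so $\alpha_g$ restricts to a complete order isomorphism $\varphi_{d(g)}(A_{d(g)})\to\varphi_{r(g)}(A_{r(g)})$. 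The family $\iota=\{\iota_u\}$ of inclusions $\iota_u:\varphi_u(A_u)\hookrightarrow A_u$ is then a $\G$-morphism $\A'\to\A$, and $\Phi$, viewed with codomain $\A'$, is a $\G$-morphism $\A\to\A'$ (this is the same map as $\Phi$, with a restricted codomain).

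Next, to verify injectivity of $\A'$, suppose $\Theta:\B\to\C$ is a $\G$-injective morphism and $\Psi:\B\to\A'$ is an arbitrary $\G$-morphism. The key step is to lift the problem to $\A$: the composition $\iota\circ\Psi:\B\to\A$ is a $\G$-morphism, and by $\G$-injectivity of $\A$ there is a $\G$-morphism $\widetilde\Psi:\C\to\A$ with $\widetilde\Psi\circ\Theta=\iota\circ\Psi$. Then define
\[
\Xi := \Phi\circ\widetilde\Psi : \C\To\A',
\]
a composition of $\G$-morphisms, hence itself a $\G$-morphism.

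Finally I would check $\Xi\circ\Theta=\Psi$. For any $u\in\Go$ and $b\in B_u$, write $\psi_u(b)=\varphi_u(a)$ for some $a\in A_u$; then using idempotency,
\[
(\Xi\circ\Theta)_u(b)=\varphi_u\bigl(\widetilde\psi_u(\theta_u(b))\bigr)=\varphi_u\bigl(\iota_u(\psi_u(b))\bigr)=\varphi_u(\varphi_u(a))=\varphi_u(a)=\psi_u(b),
\]
which is exactly the required identity. So $\Xi$ is the desired extension and $\A'$ is $\G$-injective. The only place where care is needed is the bookkeeping identifying $\Phi\circ\iota$ with the identity on $\A'$; once that is in place, the result is immediate from the universal property of $\A$.
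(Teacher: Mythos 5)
Your proposal is correct and follows essentially the same route as the paper: lift the given morphism into $\A$ via the inclusion $\iota$, extend it using the $\G$-injectivity of $\A$, and compress back into $\A'$ by composing with the idempotent $\Phi$, with idempotency giving $\Phi\circ\iota=\id$ on the range. The only difference is cosmetic (your notation for the injective and arbitrary morphisms is swapped relative to the paper's, and you spell out the well-definedness of $\A'$ as a dynamical system, which the paper leaves implicit).
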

\begin{proof}
Let $\B$ and $\C$ be groupoid dynamical systems,  $\Psi: \B\To\C$ be a $\G$-injective morphism and $\Theta:\B\To(\G, \{\varphi_u(A^u)\}_{u\in \Go}, \alpha)$ be a $\G$-morphism. Suppose that $\{i_u\}_{u\in \Go}:(\G, \{\varphi_u(A^u)\}_{u\in \Go}, \alpha)\To\A$ is  the inclusion morphism. Since $\A$ is injective, there exists a $\G$-morphism $\{\hat{\psi}_u\}_{u\in\Go}$ from $\C$ into $\A$ such that for any $u$,
\[\hat{\psi}_u\circ\psi_u=i_u\circ \theta_u.\]
Hence,
$\{\varphi_u\circ \hat{\psi}_u\}_{u\in \Go}$ is a $\G$-morphism from $\gccd $ into $\A$ such that, for any $u$,
\[(\varphi_u\circ \hat{\psi}_u)\circ \psi_u=\varphi_u\circ (\hat{\psi}_u\circ \psi_u) =\varphi_u\circ(i_u\circ\theta_u)=\theta_u.\]
\end{proof}

We are ready to prove the main result of this section.

\begin{thm}\label{t1}
Any groupoid dynamical system $\A$ has a $\G$-injective envelope $(I_{\G}(\A), \Upsilon)$, which is unique up to $\G$-isomorphism.
\end{thm}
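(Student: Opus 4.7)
The plan is to follow Hamana's strategy adapted to the groupoid setting, assembling the lemmas of Section 3 in the standard order: embed $\A$ into a $\G$-injective system, take a minimal $\A$-projection, and identify its range as $I_\G(\A)$.

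For existence, first use the classical Hamana theorem to embed each operator system $A_u$ into an injective operator system $\hat{A}_u$, and set $\hat{\A}=\{\hat{A}_u\}_{u\in\Go}$. The bundle system $\W=(\G,\{\Gamma_{\infty}(\G^u,s^*\hat{\A})\}_{u\in\Go},\ell)$ is then $\G$-injective by Lemma \ref{ll2 }, since the translation action $\ell$ does not require a $\G$-action on the fibers. Composing the map $\im_u\colon A_u\To\Gamma_\infty(\G^u,s^*\A)$ with the fiberwise inclusion $\Gamma_\infty(\G^u,s^*\A)\hookrightarrow\Gamma_\infty(\G^u,s^*\hat{\A})$ yields a $\G$-embedding $\A\hookrightarrow\W$, so $\A$ is a $\G$-dynamical subsystem of the $\G$-injective system $\W$.

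Next apply Theorem \ref{t1} (the minimal-projection theorem) to obtain a minimal $\A$-projection $\Phi=\{\varphi_u\}_{u\in\Go}$ on $\W$, and define
\[
I_\G(\A)=\bigl(\G,\{\varphi_u(\Gamma_\infty(\G^u,s^*\hat{\A}))\}_{u\in\Go},\ell\bigr),
\]
with $\Upsilon$ the corestriction of the embedding $\A\hookrightarrow\W$ to $I_\G(\A)$. Lemma \ref{l5} shows that $I_\G(\A)$ is itself $\G$-injective, while Lemma \ref{l3} shows that the extension $(I_\G(\A),\Upsilon)$ is $\G$-rigid; Lemma \ref{l4} then upgrades rigidity to $\G$-essentiality. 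Thus $(I_\G(\A),\Upsilon)$ is a $\G$-injective $\G$-essential extension of $\A$, which is the notion of $\G$-injective envelope.

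For uniqueness, suppose $(I_\G(\A)',\Upsilon')$ is another $\G$-injective envelope. By $\G$-injectivity of $I_\G(\A)'$ applied to $\Upsilon'\colon\A\To I_\G(\A)'$ along the embedding $\Upsilon$, there is a $\G$-morphism $\Theta\colon I_\G(\A)\To I_\G(\A)'$ with $\Theta\circ\Upsilon=\Upsilon'$. Symmetrically, there is $\Theta'\colon I_\G(\A)'\To I_\G(\A)$ with $\Theta'\circ\Upsilon'=\Upsilon$. Then $(\Theta'\circ\Theta)\circ\Upsilon=\Upsilon$, and $\G$-rigidity of $I_\G(\A)$ forces $\Theta'\circ\Theta=\{\id_{I_\G(\A)_u}\}_{u\in\Go}$; the analogous identity $\Theta\circ\Theta'=\id$ follows from rigidity of $I_\G(\A)'$, so $\Theta$ is a $\G$-isomorphism.

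The main obstacle will be the bookkeeping in the existence step, specifically verifying that the composite embedding $\A\hookrightarrow\W$ is genuinely $\G$-equivariant (so that $\A$ really is a $\G$-dynamical subsystem of a $\G$-injective system) and that one can legitimately invoke Theorem \ref{t1} and Lemma \ref{l5} at the range of the minimal projection, since the inherited action on $\{\varphi_u(\Gamma_\infty(\G^u,s^*\hat{\A}))\}_{u\in\Go}$ needs to be identified with the restriction of $\ell$; once these are checked, the remaining arguments are formal diagram chases using the lemmas already in place.
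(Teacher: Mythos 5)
Your proposal is correct and follows essentially the same route as the paper: embed $\A$ as a $\G$-dynamical subsystem of the $\G$-injective section system $(\G,\{\Gamma_{\infty}(\G^u,s^*\cdot)\}_{u\in\Go},\ell)$ with injective fibers (the paper uses the constant fiber $B(H)$ with $\bigoplus_u A_u\subseteq B(H)$, while you use fiberwise injective systems $\hat{A}_u$ --- a cosmetic difference), take a minimal $\A$-projection, and identify its range as $I_\G(\A)$ via Lemma \ref{l5} and Lemma \ref{l3}, with the same injectivity-plus-rigidity argument for uniqueness.
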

\begin{proof}
Let $H$ be a Hilbert space with $\oplus_{u\in\Go} A^u\subseteq B(H)$, and put
\[\W=(\G, \{\ell^{\infty}(\G^u, B(H))\}_{u\in \Go}, \ell), \{\j_u\}_{u\in \Go}).\]
  For  $u\in \Go$, define   $J_u: A^u\To \ell^{\infty}(\G^u, B(H))$ by $J_u(a)(g)=g^{-1}\cdot a$. We may  regard $\A$ as a $\G$-dynamical  subsystem of  $\W$. By Theorem \ref{t1}, there exists a minimal $\A$-projection $\Theta$ on $\W$, and  $I_{\G}(\A)=(\G, \{\theta_u(\ell^{\infty}(\G^u, B(H)))\}_{u\in \Go}, \ell)$ is injective by Lemma \ref{l5}. Suppose that
  \[i_u: \A^u\To \theta_u(\ell^{\infty}(\G^u, B(H)))\]
  is  the inclusion map and $\Upsilon=\{i_u\}_{u\in \Go}$.
Then $(I_G(\A), \Upsilon)$ is a $\G$-injective envelope of $\A$, by Lemma \ref{l3}.
Now if  $(\B, \Phi)$ is any other  $\G$-injective envelop of $\A$, then there exist $\G$-morphisms $\Psi$ from $I_{\G}(\A)$ into
$\B$ and $\{\hat{\psi}_u\}_{u\in \Go}$  from $\B$ into  $I_{\G}(\A)$  such that, for  $u\in \Go$,
$\psi_u\circ i_u=\varphi_u$ and $\hat{\psi}_u\circ \varphi_u=i_u$, hence
$\psi_u\circ\psi_u\circ \varphi_u=\varphi_u$ and $\hat{\varphi}\circ\psi_u\circ i_u=i_u$. By the rigidity,
$\hat{\psi}_u\circ\psi_u=id_{\theta(\ell^{\infty}((\G^u, B(H))))}$ and $\psi_u\circ\hat{\psi}_u=id_{B_u}$.
\end{proof}
Our next step is to find an analog for the Hamana boundary. We first need the following result.

\begin{prop}
Let $\gad$ be a $\G$-injective groupoid dynamical system. Then for any $u\in \Go$,   there
is a unique multiplication
$\cdot : A_u\times A_u\To A_u$
making $A_u$ a $C^*$-algebra in its given $*$-operation and norm, and for any $g\in \G$,
$\alpha_g :A_{d(g)}\To A_{r(g)}$  is an isomorphism of $C^*$-algebras. Moreover, if for any $u\in \Go$,  $A_u$  is an operator system in a commutative $C^*$-algebra, then under this multiplication, each $A_u$ becomes a commutative $C^*$-algebra.
\end{prop}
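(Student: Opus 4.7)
The plan is to reduce everything to the classical Choi--Effros/Hamana result for a single injective operator system and then use uniqueness to handle the action.

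The first step is to extract fiberwise injectivity from $\G$-injectivity. By the proposition characterizing $\G$-injectivity (the one stated right before Lemma~\ref{l1}), the hypothesis that $\gad$ is $\G$-injective implies in particular that each $A_u$ is an injective operator system. With this in hand, the classical theorem of Choi--Effros (as reformulated by Hamana in \cite{ham2}) applies: for each $u$, choose a faithful representation $A_u\subseteq B(H_u)$ and, by injectivity, a completely positive projection $\pi_u : B(H_u)\To A_u$ with $\pi_u|_{A_u}=\id_{A_u}$. Define
\[
a\cdot_u b := \pi_u(ab),\qquad a,b\in A_u,
\]
where the product on the right is taken in $B(H_u)$. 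The Choi--Effros argument shows that $(A_u,\cdot_u,*,\|\cdot\|)$ is a $C^*$-algebra, and that this multiplication is the unique one compatible with the given involution and norm; in particular it is independent of the choice of representation and of the choice of projection $\pi_u$.

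The second step handles compatibility with the action. Fix $g\in\G$ and pull the multiplication on $A_{d(g)}$ forward along $\alpha_g$ by setting
\[
x\star y := \alpha_g\bigl(\alpha_g^{-1}(x)\cdot_{d(g)}\alpha_g^{-1}(y)\bigr),\qquad x,y\in A_{r(g)}.
\]
Because $\alpha_g$ is a unital complete order isomorphism, it preserves the $*$-operation and the norm, so $(A_{r(g)},\star,*,\|\cdot\|)$ is again a $C^*$-algebra with the prescribed involution and norm. The uniqueness clause of the first step forces $\star=\cdot_{r(g)}$, which is exactly the statement that $\alpha_g$ is a $C^*$-algebra isomorphism.

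For the moreover part, suppose each $A_u$ is an operator system inside a commutative $C^*$-algebra $C(X_u)$. By injectivity of $A_u$, the identity $\id_{A_u}$ extends to a completely positive projection $\pi_u:C(X_u)\To A_u$. Using this particular projection to compute the Choi--Effros product gives
\[
a\cdot_u b=\pi_u(ab)=\pi_u(ba)=b\cdot_u a,\qquad a,b\in A_u,
\]
since $C(X_u)$ is commutative. Hence $A_u$ is a commutative $C^*$-algebra. The main delicate point in this argument is the independence of the Choi--Effros product from the choice of projection; everything else (the transport under $\alpha_g$ and the commutativity) becomes formal once uniqueness of the product is invoked.
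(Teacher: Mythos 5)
Your proof is correct, but it reaches the second assertion (that each $\alpha_g$ is a $C^*$-isomorphism) by a genuinely different mechanism than the paper. The paper never works with fiberwise projections: it invokes $\G$-injectivity once to produce a single $\G$-equivariant u.c.p.\ projection $\Phi=\{\phi_u\}_{u\in\Go}$ from the system with fibers $\ell^{\infty}(\G^u,B(H))$ onto $\A$, defines $x\circ y=\phi_u(xy)$ using this one coherent family, and then $\alpha_g(x\circ y)=\alpha_g(x)\circ\alpha_g(y)$ is a one-line computation from the equivariance of $\Phi$ together with the fact that the translation action $\ell_g$ is multiplicative on $\ell^{\infty}(\G^u,B(H))$; the commutative case is handled the same way after re-embedding $\A$ equivariantly into the system with commutative fibers $\ell^{\infty}(\G^u,\ell^{\infty}(X))$, $X=\bigsqcup_u X_u$. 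You instead use only fiberwise injectivity (legitimately extracted from the unnumbered proposition characterizing $\G$-injectivity) and shift the entire burden of equivariance, and of commutativity, onto the uniqueness clause of the Choi--Effros theorem via transport of structure. Your route is more modular --- no equivariant projection needs to be constructed, and it makes transparent that compatibility with the action is automatic once the fiberwise product is canonical; the paper's route proves equivariance without ever appealing to uniqueness (uniqueness enters only because the statement asserts it), at the cost of constructing the equivariant projection. One point of precision on which your argument leans harder than the paper's: the Choi--Effros uniqueness is uniqueness of the product inducing the given \emph{operator system} (matrix order) structure, not merely the given involution and norm --- the opposite multiplication on a noncommutative $A_u$ yields a $C^*$-algebra with the same unit, involution and norm, so the naive reading of uniqueness is false. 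Your transport argument survives this correction exactly because $\alpha_g$ is a \emph{complete} order isomorphism, so the transported product induces the given matrix order; you should state the uniqueness you invoke in that form (the proposition's own wording, which you echo, is loose on the same point).
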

\begin{proof}
Similar to the proof of  Lemma \ref{l1},   there is a Hilbert space $H$ such that
\[\W=(\G, \{\ell^{\infty}(\G^u, B(H))\}_{u\in \Go}, \ell)\]
has $\gad$ as a $\G$-dynamical subsystem. By injectivity, there is a $\G$-morphism $\phi_u: \W\To \A$ such that, for any $u\in \Go$,
$\phi_u$ is c.p. and $\phi_u|_{A_u}=id_{A_u}$. Given
$x,y\in \A_u=\phi_u(\ell^{\infty}(\G^u, B(H)))$, put $x\circ y= \phi_u(xy).$
By \cite[Theorem 6.1.3.]{er}, this operation defines a multiplication on $A_u$, making $A_u$  a C*-algebra. For  $g\in \G$ and $x, y\in A_{d(g)}$,
\[g\cdot (x\circ y)=g\cdot \phi_{d(g)}(xy)=\phi_{r(g)}(g\cdot (xy))=\phi_{r(g)}((g\cdot x)(g\cdot y))=
(g\cdot x)\circ (g\cdot y).\]
Moreover, if for any $u\in \Go$, there is a compact  Hausdorff space $X_u$ such that $A_u\subseteq C(X_u)$, put  $X=\bigsqcup_u X_u$,  then we may  regard $C(X_u)$ as a $C^*$-subalgebra of $\ell^{\infty}(X)$, and $\gad$ is a subsystem of
$(\G, \{\ell^{\infty}(\G^u, \ell^{\infty}(X))\}, \ell )$. If we define the multiplication on $A_u$ as above, then $A_u$ is a commutative $C^*$-algebra.
\end{proof}

\begin{cor}Let $\gbd$ be the injective envelope of a groupoid dynamical system $\gad$  such that, for any $u\in \Go$, $A_u$ is an operator system in  a commutative $C^*$-algebra. Then, for any $u\in \Go$, $B_u$ is commutative $C^*$-algebra.
\end{cor}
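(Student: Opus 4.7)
The strategy is to reduce the corollary directly to the preceding proposition, which asserts that in a $\G$-injective system whose fibers embed in commutative $C^*$-algebras, the canonical $C^*$-multiplication on each fiber is commutative. Since $\B$ is $\G$-injective by construction, it suffices to exhibit each $B_u$, up to $\G$-isomorphism, as an operator subsystem of a commutative $C^*$-algebra.

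The idea is to run the construction of $I_{\G}(\A)$ inside a \emph{commutative} ambient rather than the noncommutative one $\ell^{\infty}(\G^u,B(H))$ used previously. By hypothesis there are compact Hausdorff spaces $X_u$ with $A_u\subseteq C(X_u)$; set $X=\bigsqcup_{u\in\Go}X_u$, so that each $A_u$ embeds into the commutative $C^*$-algebra $\ell^{\infty}(X)$. Then $\A$ becomes a $\G$-dynamical subsystem of
\[
\W'=(\G,\{\ell^{\infty}(\G^u,\ell^{\infty}(X))\}_{u\in\Go},\ell),
\]
via the sectionwise map $a\mapsto (g\mapsto g^{-1}\cdot a)$. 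Since $\ell^{\infty}(X)$ is an injective operator system, the earlier lemma on $\G$-injectivity of such $\ell$-bundles (applied to the constant fiber family $\ell^{\infty}(X)$) shows that $\W'$ is $\G$-injective.

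Now the existence-and-uniqueness proof for the injective envelope goes through verbatim with $\W'$ in place of the original ambient: a minimal $\A$-projection on $\W'$ exists, its range is a $\G$-rigid, $\G$-injective extension of $\A$, and hence is a $\G$-injective envelope of $\A$. By the uniqueness clause, this envelope is $\G$-isomorphic to $\B$, so up to $\G$-isomorphism each $B_u$ sits as an operator subsystem of the commutative $C^*$-algebra $\ell^{\infty}(\G^u,\ell^{\infty}(X))$. The preceding proposition then endows every $B_u$ with a commutative $C^*$-multiplication, as required.

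The only point requiring a moment's care is the transport of the multiplication across the $\G$-isomorphism provided by uniqueness; this is automatic, since the multiplication in the preceding proposition is uniquely determined by the $*$-operation, the norm, and $\G$-injectivity alone, all of which are preserved by any $\G$-isomorphism of complete order. Hence commutativity transfers from the commutative realization to the given $\B$.
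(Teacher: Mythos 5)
Your proof is correct and is essentially the argument the paper intends: the corollary is stated without proof, but the second half of the preceding proposition's proof is exactly the commutative-ambient embedding of $\A$ into $(\G,\{\ell^{\infty}(\G^u,\ell^{\infty}(X))\}_{u\in\Go},\ell)$ that you use, and combining that realization of the envelope with the uniqueness clause of the existence theorem for $I_{\G}(\A)$ is the intended route. Your closing remark on transporting the multiplication is also right, and is exactly what the uniqueness part of the proposition is there to guarantee.
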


The bundle with one dimensional fibres is of special interest. Let $\G$ be a groupoid and for  $u\in\Go$, set $C_u=\mathbb{C}$. Then $\gccd$ is a groupoid dynamical system where $\gamma $ is the trivial action, that is, $\gamma_g :C_{d(g)}\To C_{r(g)}$ is the identity. Then the injective envelope of $\C$ is of the form $(\G, C(\partial_H^u), \beta)$, where $\partial_H^u$ is a compact Hausdorff space. We call $\{\partial_H^u\}_{u\in \Go}$ the \textit{Hamana boundary} of $\G$ and denote it by $\mathbf{\partial}_H (\G)$. For any $g\in \G$, $\gamma_g$ induces a homeomorphism $\gamma_g^*: \partial_H^{r(g)}\To \partial_H^{d(g)}$ such that, for any $f\in C(\partial_H^{d(g)})$,
$\gamma_g(f)(x)=f(\gamma^*_{g^{-1}}\cdot x).$
The groupoid dynamical system $\C$ is $\G$-injective if and only if there are states $\phi_u: \ell^{\infty}(\G^u)\To \mathbb{C}$ such that for any $g\in \G$ and $f\in \ell^{\infty}(G^{d(g)})$,
\ $\phi_{r(g)}(g\cdot f)=\phi_{d(g)}(f).$

\section{Furstenberg boundary}

The notion of groupoid action on  sets (or topological spaces) generalizes the concept of
group action by considering partially defined maps.
\begin{defn}
Let $\G$ be a groupoid. A $\G$-space is a bundle of  locally compact Hausdorff spaces $\mathcal{X}=\{X_u\}_{u\in \Go}$ and a bundle of maps $\{\alpha_g\}_{g\in \G}$ such that
\begin{enumerate}
\item for any $g\in \G$, $\alpha_g$ is a homeomorphism from  $X_{d(g)}$  onto $X_{r(g)}$,
\item for any $u\in \Go$, $\alpha_u$ is the identity map $id_{X_u}$,
\item for any $(g, h)\in \G^{(2)}$, $\alpha_g\circ\alpha_h=\alpha_{gh}$.
\end{enumerate}

We denote $\alpha_g(x)$ by $g\cdot x$. A $\G$-subspace of $\mathcal{X}$ is a bundle of locally compact Hausdorff  spaces $\mathcal{Y}=\{Y_u\}_{u\in \Go}$ such that, for each $u$,  $Y_u\subseteq X_u$ and, for each $g$, the restriction of $\alpha_g$ to $Y_{d(g)} $ is a homeomorphism onto $Y_{r(g)}$.
\end{defn}

For the rest of this section, all topological spaces are assumed to be compact and Hausdorff.

\begin{defn}
A $\G$-map between $\G$-spaces $\X$ and $\Y$ is  a family of maps $\{\phi_g\}_{g\in \G}$ such that
\begin{enumerate}
\item for any $g\in \G$,  $ \phi_g: X_{d(g)}\To Y_{d(g)}$  is continuous,
\item for any $g\in \G$ and $x\in X_{d(g)}$, $ g\cdot \phi_{d(g)}=\phi_{r(g)}(g\cdot x)$.
\end{enumerate}
\end{defn}
We denote  the space of complex finite Radon measures on $X$ by $M(X)$ and the subset of probability measures by $P(X)$, equipped with the weak$^*$-topology. There is natural embedding of $X$ into $P(X)$ as point masses. If $\mathcal{X}=\{X_u\}_{u\in \Go}$ is a $\G$-space, then $\mathcal{P}(\mathcal{X})=\{P(X_u)\}_{u\in \Go}$ is a $\G$-space. For $g\in G$ and $\mu\in P(X_{d(g)})$ define $g\cdot \mu(E)=\mu(g^{-1}\cdot E)$, for Borel subsets $E$  of $X_{r(g)}$.

Let $\X$ be a $\G$-space. Then for any $g\in \G$, the map $x\mapsto g\cdot x$  is a homeomorphism from $X_{d(g)}$ onto $X_{r(g)}$. This induces an $*$-isomorphism
\[\alpha_g : C(X_{d(g)})\To C(X_{r(g)});\ \ \alpha_g(f)(x)=f(g^{-1}\cdot x),\]
and $(\G, \{C(X_u)\} , \alpha)$ is a groupoid dynamical system. Conversely, given the groupoid dynamical system $(\G, \{C(X_u)\},  \alpha )$,  for $g\in \G$, $\alpha_g: C(X_{d(g)})\To C(X_{r(g)})$ is an *-isomorphism, and by Banach-Stone theorem, there exists a homeomorphism $\tilde{\alpha}_g: X_{r(g)}\To X_{d(g)}$ such that, for $f\in C(X_{d(g)})$,  $\alpha_g(f)(x)=f (\widetilde{\alpha}_g(x))$. For
\[\alpha_g^*: X_{d(g)}\To X_{r(g)};\ \ \alpha_g^*(x)=\tilde{\alpha}_{g^{-1}}(x),\]
$\{X_u\}_{u\in \Go}$ is a $\G$-space.

Let $\X$ and $\Y$ be $\G$-spaces. There is a one-to-one correspondence between $\G$-morphisms $\Phi :(\G, \{C(X_u)\}_{u\in\Go}, \alpha)\To (\G, \{C(Y_u)\}_{u\in\Go}, \beta)$ and $\G$-maps $\Phi^*:  \{Y_u\}_{u\in\Go}\To \{P(X_u)\}_{u\in \Go}$, given by
$\phi_g(f)(y)=\phi_g^*(y)(f).$ Here, the restriction of the adjoint map $\phi_g^*$ to $Y_{d(g)}$  is a continuous map from $Y_{d(g)} $ into $P(X_{d(g)})$.

\begin{defn}
A $\G$-space $\X$ is called minimal if  there is no  nontrivial $\G$-subspace, and strongly proximal if for every $u,v \in \Go$,  with $G_u^v\neq \emptyset$, and  $\mu\in P(X_u)$, $\overline{\G_u^v.\mu}\cap X_{v}\neq \emptyset$. A compact $\G$-space $X$ is called a $\G$-boundary   if it is minimal and strongly proximal, or equivalently, if
$ \X$ is the unique minimal  closed $\G$-subspace  of $\mathcal{P}(\X)$.
\end{defn}
By Zorn lemma,  every $\G$-space has a minimal $\G$-subspace. Also, every $\G$-subspace of a strongly  proximal $\G$-space is again strongly  proximal.

Let $\{\X^i=\{X^i_u\}_{u\in \Go}\}_{i\in I}$ be a family of $\G$-spaces. The product space
$\prod _{i\in I}\X^i=\{~\prod_{i\in I} X^i_u  ~\}_{u\in\Go}$ is a $\G$-space with the diagonal $\G$-action.
\begin{lem}\label{lemm2}
If $\{\mathcal{X}_i\}_{i\in I} $ is a family of compact strongly proximal $\G$-spaces, then $\prod_{i\in I}\mathcal{X}_i$ is also strongly proximal.
\end{lem}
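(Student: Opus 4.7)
The plan is to establish the statement first for two-factor products, extend by induction to arbitrary finite products, and handle the general case via weak$^*$-compactness and diagonalization. For the base case, let $\X^1=\{X^1_u\}_u$ and $\X^2=\{X^2_u\}_u$ be strongly proximal, fix $u,v\in\Go$ with $\G_u^v\neq\emptyset$, and take $\mu\in P(X^1_u\times X^2_u)$ with first marginal $\mu_1$. Strong proximality of $\X^1$ yields a net $(g_\alpha)\subseteq\G_u^v$ with $g_\alpha\cdot\mu_1\to\delta_{x_1}$ for some $x_1\in X^1_v$, and weak$^*$-compactness of $P(X^1_v\times X^2_v)$ allows us to refine to assume $g_\alpha\cdot\mu\to\nu$. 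Continuity of marginals pins down $\nu=\delta_{x_1}\otimes\nu_2$ for some $\nu_2\in P(X^2_v)$, so $\delta_{x_1}\otimes\nu_2\in\overline{\G_u^v\cdot\mu}$. Now invoke strong proximality of $\X^2$ at $(v,v)$, which is available because $v\in\G_v^v$: choose a net $(h_\beta)\subseteq\G_v^v$ with $h_\beta\cdot\nu_2\to\delta_{x_2}$, and, after refining, $h_\beta\cdot x_1\to y_1\in X^1_v$. Since $h\G_u^v=\G_u^v$ for every $h\in\G_v^v$, the set $\overline{\G_u^v\cdot\mu}$ is $\G_v^v$-invariant; it therefore contains $h_\beta\cdot(\delta_{x_1}\otimes\nu_2)=\delta_{h_\beta\cdot x_1}\otimes h_\beta\cdot\nu_2$ for every $\beta$, and hence the weak$^*$-limit $\delta_{(y_1,x_2)}$. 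This proves strong proximality of $\X^1\times\X^2$, and induction gives every finite product.

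For arbitrary $I$, fix $u,v\in\Go$ with $\G_u^v\neq\emptyset$ and $\mu\in P(\prod_{i\in I} X^i_u)$, and for finite $F\subseteq I$ write $\pi_F$ for the weak$^*$-continuous and $\G$-equivariant projection to $\prod_{i\in F}X^i_\ast$. Applying the finite case to $\pi_F(\mu)$, for each pair $(F,\varepsilon)$ we obtain $g_{F,\varepsilon}\in\G_u^v$ with $\pi_F(g_{F,\varepsilon}\cdot\mu)$ lying within weak$^*$-distance $\varepsilon$ of a point mass in $P(\prod_{i\in F}X^i_v)$. Indexed by the directed set of such pairs (refinement of $F$, shrinking of $\varepsilon$), these form a net $(g_\lambda)\subseteq\G_u^v$; after passing to a weak$^*$-convergent subnet we have $g_\lambda\cdot\mu\to\nu\in\overline{\G_u^v\cdot\mu}$, and by construction $\pi_F(\nu)$ is a point mass for every finite $F$. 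Compatibility of these point masses under the inverse system produces a unique $x\in\prod_{i\in I}X^i_v$ with $\nu=\delta_x$, establishing strong proximality.

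The only genuine subtlety is the bookkeeping of sources and ranges in the two-factor step: after the first push we sit over $v$, so the second push must be implemented by isotropy arrows in $\G_v^v$ if the composition is to remain in $\G_u^v$. This is precisely what makes $\overline{\G_u^v\cdot\mu}$ invariant under the second push. Once this is in place, the extension to arbitrary $I$ is standard weak$^*$-compactness combined with diagonalization.
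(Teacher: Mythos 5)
Your two-factor step is correct and is essentially the paper's own argument for the finite case, with better bookkeeping: your observation that $h\G_u^v=\G_u^v$ for every $h\in\G_v^v$, hence that $\overline{\G_u^v\cdot\mu}$ is invariant under the isotropy $\G_v^v$, is exactly what justifies the paper's containment $\overline{\G_v^v\cdot\overline{\G_u^v\cdot\mu}}\subseteq\overline{\G_u^v\cdot\mu}$ (which the paper asserts without proof), and your splitting of the limit measure as $\delta_{x_1}\otimes\nu_2$ is the ``straightforward measure theory argument'' the paper invokes. Using strong proximality of the second factor over the pair $(v,v)$ is legitimate since the unit $v$ lies in $\G_v^v$, and the induction to arbitrary finite products is fine.

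The infinite case, as written, has a genuine (though repairable) gap. First, ``within weak$^*$-distance $\varepsilon$'' presupposes that $P(\prod_{i\in F}X^i_v)$ is metrizable, which is not among the hypotheses: the fibres are only compact Hausdorff. Second, and more substantively, even after fixing metrics $d_F$ on each $P(\prod_{i\in F}X^i_v)$, the claim that ``by construction $\pi_{F_0}(\nu)$ is a point mass for every finite $F_0$'' does not follow from your net: for $\lambda=(F,\varepsilon)\geq(F_0,\varepsilon_0)$ you know $\pi_F(g_\lambda\cdot\mu)$ is $\varepsilon$-close to some point mass in $d_F$, but to transfer this to the $F_0$-marginal you need uniform control of the pushforward maps $P(\prod_{i\in F}X^i_v)\to P(\prod_{i\in F_0}X^i_v)$ that is independent of $F$; their moduli of continuity depend on $F$, so the ordering of your directed set does not yield the required eventual closeness for a fixed $F_0$. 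The clean repair drops $\varepsilon$ altogether and needs no metrizability: by equivariance and compactness, $\pi_F\bigl(\overline{\G_u^v\cdot\mu}\bigr)=\overline{\G_u^v\cdot\pi_F(\mu)}$, and by the finite case this set contains a point mass, so one may choose $\nu_F\in\overline{\G_u^v\cdot\mu}$ whose $F$-marginal is \emph{exactly} a point mass; any weak$^*$-limit $\nu$ of a subnet of $(\nu_F)_F$ (indexed by finite subsets of $I$ under inclusion) then has every finite marginal a limit of point masses, hence a point mass (the point masses form a weak$^*$-closed set), and your compatibility argument identifies $\nu=\delta_x\in\overline{\G_u^v\cdot\mu}$. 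Note that this repaired argument is the paper's own proof in different clothing: the paper writes $\overline{\G_u^v\cdot\mu}^{w^*}=\bigcap_F\overline{\G_u^v\cdot\mu}^F$ via Stone--Weierstrass (closures in the topologies induced by functions depending on finitely many coordinates), shows each $\overline{\G_u^v\cdot\mu}^F$ meets the point masses using the finite case, and concludes by the Cantor intersection theorem; ``the $F$-marginal of $\nu$ is a point mass'' and ``$\nu$ lies in the $C_F$-closure of the point masses'' are the same condition.
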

\begin{proof}
For the case where  $I$ is finite, it suffices to prove check the claim when $I$ has two elements.
Let $\X=\{X_u\}_{u\in\Go}$  and $\Y=\{Y_u\}_{u\in\Go}$ be two strongly proximal $\G$-spaces. Let us show that $\X\times \Y=\{X_u\times Y_u\}_{u\in\Go}$  is strongly proximal.  Define $\Lambda _u: P(X_u\times Y_u)\To P(X_u)$ by
$\Lambda_u(\mu)(E)=\mu(E\times Y_u)$. Take $u,v\in \Go$ with $\G_u^v\neq \emptyset$ and $\mu\in P(X_u\times Y_u)$. It is easy to see that $\Lambda_u(\overline{\G_u^v\cdot\mu})=\overline{\G_u^v\cdot\Lambda_u(\mu)}.$  Since $\X$ is strongly proximal, there exists $x\in X_v$ such that $\delta_x\in \Lambda_u(\overline{\G_u^v\cdot\mu})$.  An straightforward measure theory argument shows that there exists $\nu\in P(X_v)$  such that $\delta_x\times \nu\in \overline{\G_u^v\cdot\mu}$.  Since $\Y$ is strongly proximal, there exists a net $\{g_i\}$ in $\G^v_v$ and $y\in X_v$ such that $g_i\cdot\nu\To y.$
By compactness, we may assume that there is $x'\in X_v$ such that $g_i\cdot x\To x'$. Therefore, $\delta_{x'
}\otimes \delta_y\in \overline{\G_v^v\cdot \overline{\G_u^v\cdot\Lambda_u(\mu)}}\subseteq \overline{\G_u^v\cdot\Lambda_u(\mu)}$.

For the general case, we need to use the idea of functions depending on finitely many variables. More precisely, let  $\{ X_i\}_{i\in I}$ be a family of compact Hausdorff spaces. For any finite subset $F\subseteq I$, let $C_F$ be the set of all continuous functions in $C(\prod_{i\in I} X_i)$  that depend only on  variables indexed by $F$, i.e., $f\in C_F$  if and only if  $f((x_i)_{i\in I})=f((y_i)_{i\in I})$,  whenever $(x_i)_{i\in I}, (y_i)_{i\in I}\in \prod_{i\in I} X_i$  with  $x_i=y_i$, for all $i\in F$. By Stone-Weierstrass theorem, $\bigcup_F C_F$ is dense in $C(\prod_{i\in I} X_i)$, where the union is taken over all finite subsets of $I$. Therefore, if  $P\subseteq P(\prod_{i\in I} X_i) $, then
\[\overline{P}^{w^*}=\bigcap_F \overline{P}^F,\]
where $\overline{P}^F$ is the closure of $P$ in the weak topology on $P(\prod_{i\in I} X_i) $   induced by $C_F$.
Let $F\subseteq I$ be  a finite subset and let  $(x_i)_{i\in I\setminus F}\in \prod_{i\in I\setminus F} X_i $. If $\mu \in P(\prod_{i\in I} X_i)$, then there exists $\mu_F\in \prod_{i\in  F} X_i $ such that, for all $f\in C_F$,
\[\int _{\prod_{i\in I} X_i}f~d\mu= \int_{{\prod_{i\in I} X_i}} f ~d(\mu_F\times \delta_{(x_i)_{i\in I\setminus F}}).\]

Next, if $I$ is an arbitrary set and $u,v\in \Go$ such that $\G_u^v\neq \emptyset,$ and $\mu\in P(\prod_{i\in I} X^i_u)$, by the above observation, for any finite subset $F\subseteq  I$, $\overline{G_u^v\cdot\mu}^F\bigcap \prod_{i\in I} X^i_v\neq \emptyset $ and hence $\overline{G_u^v\cdot\mu}^{w^*}\bigcap\prod_{i\in I} X^i_v\neq \emptyset$, by the Cantor intersection theorem.
\end{proof}

Now we could extend \cite[Proposition 4.2]{fur}.

\begin{lem}\label{lemm3}
Let $\X$ be a $\G$-boundary and $\Y$ be a minimal compact $\G$-space. Then every continuous $\G$-map $\{\phi_u\}_{u\in \Go}$ from $\Y$ into $\mathcal{P}(\X)$ has $\X$ as its range, i.e., for all $u\in \Go$, $\phi_u(Y_u)=X_u$. Equivalently, every $\G$-morphism from the groupoid dynamical system $ \{C(X_u)\}_{u\in \Go}$ into the groupoid dynamical system $\{C(Y_u)\}_{u\in \Go}$ is an $*$-isometric $\G$-morphism. Moreover, there is at last one such map.
\end{lem}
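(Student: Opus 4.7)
The plan is to adapt Furstenberg's classical argument \cite[Proposition 4.2]{fur} to the present bundle setting. Given a continuous $\G$-map $\Phi=\phiu\colon \Y\To\p(\X)$, set $Z_u:=\phi_u(Y_u)\subseteq P(X_u)$. Continuity of $\phi_u$ and compactness of $Y_u$ make each $Z_u$ closed, while the $\G$-equivariance identity $g\cdot\phi_{d(g)}(y)=\phi_{r(g)}(g\cdot y)$ ensures that $\mathcal{Z}:=\{Z_u\}_{u\in\Go}$ is a closed $\G$-subspace of $\p(\X)$. The proof then proceeds in three movements: first, transfer minimality from $\Y$ to $\mathcal{Z}$; second, use strong proximality of $\X$ to intersect $\mathcal{Z}$ with the point masses $\X$; third, combine these with minimality of $\X$ itself to force equality.

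For the minimality transfer, if $\{W_u\}$ is any closed nonempty $\G$-subbundle of $\mathcal{Z}$, then $\{\phi_u^{-1}(W_u)\}$ is a closed $\G$-subbundle of $\Y$ with $\phi_u^{-1}(W_u)\neq\emptyset$ whenever $W_u\neq\emptyset$; by minimality of $\Y$ this subbundle is $\Y$, hence $W_u=Z_u$ for every $u\in\Go$. Next, pick $\mu\in Z_u$: for every $v$ with $\G_u^v\neq\emptyset$, strong proximality of $\X$ gives $\overline{\G_u^v\cdot\mu}\cap X_v\neq\emptyset$, and the orbit $\G_u^v\cdot\mu$ already lies in the closed set $Z_v$, since $\mu=\phi_u(y)$ implies $g\cdot\mu=\phi_v(g\cdot y)\in Z_v$ for $g\in\G_u^v$. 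Hence $\{Z_u\cap X_u\}_{u\in\Go}$ is a nonempty closed $\G$-invariant subbundle of $\mathcal{Z}$; by the minimality of $\mathcal{Z}$ just proved (applied orbit-by-orbit in $\Go$ if necessary), $Z_u\cap X_u=Z_u$, i.e.\ $\mathcal{Z}\subseteq\X$. But then $\mathcal{Z}$ is a closed $\G$-invariant subbundle of the minimal $\G$-space $\X$, so $Z_u=X_u$ for every $u\in\Go$. Translating through the $\G$-morphism/$\G$-map duality recorded just before this lemma, a $\G$-morphism $C(\X)\To C(\Y)$ corresponds to a $\G$-map $\Y\To\p(\X)$ with image the point masses $\X$, which in turn forces the morphism to be $*$-isometric.

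For existence of at least one such $\G$-map, I would form the product $\G$-space $\Y\times\p(\X)=\{Y_u\times P(X_u)\}_{u\in\Go}$ under the diagonal action, apply Zorn's lemma to pick a minimal nonempty closed $\G$-subbundle $\mathcal{Z}_0$, and observe that the first-coordinate projection $\pi_1\colon\mathcal{Z}_0\To\Y$ is surjective (its image is a nonempty closed $\G$-subbundle of $\Y$, hence $\Y$ by minimality). A $\G$-equivariant continuous selection $\Y\To\p(\X)$ is then obtained by taking, for each $y\in Y_u$, the barycenter in $P(X_u)$ of the fiber $\pi_1^{-1}(y)$, using that $P(X_u)$ is a compact convex subset of a locally convex space and that $\G$ acts by affine homeomorphisms. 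The main obstacle is the bookkeeping of groupoid minimality: one must verify that the set of units over which a nonempty closed $\G$-invariant subbundle is nonempty is itself $\G$-saturated, so that the intersection with $\X$ used above propagates to every orbit in $\Go$; at worst this forces an orbit-by-orbit argument in $\Go$. A secondary technical issue is the continuity of the fiberwise barycenter selection in the existence step, which can be established via a Michael-type selection theorem, or circumvented by invoking the $\G$-injectivity of $C(\partial_H\G)$ developed in Section~3.
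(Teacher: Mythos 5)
Your argument for the main assertion is correct and is essentially the paper's proof, just unpacked. Where the paper invokes the characterization of a $\G$-boundary as the unique minimal closed $\G$-subspace of $\p(\X)$ (so that the image bundle $\{\phi_u(Y_u)\}_{u\in\Go}$ contains $\X$, and then $\{\phi_u^{-1}(X_u)\}_{u\in\Go}=\Y$ by minimality of $\Y$), you rederive that containment directly from ``minimal plus strongly proximal'': you transfer minimality from $\Y$ to the image $\mathcal{Z}$, use strong proximality (with $v=u$, since $u\in\G_u^u$, so $Z_u\cap X_u\neq\emptyset$ for every unit) to get $\mathcal{Z}\subseteq\X$, and finish by minimality of $\X$. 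This is fine, and it has the side benefit of verifying the equivalence of the two formulations of ``boundary'' that the paper's definition asserts without proof.

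The genuine problem is your last paragraph. The clause ``there is at last one such map'' is a typo for ``at \emph{most} one'', not ``at least one'': the paper's own proof of this clause is a uniqueness argument --- if $\Phi$ and $\Psi$ are two $\G$-maps from $\Y$ into $\p(\X)$, then $\{(\phi_u+\psi_u)/2\}_{u\in\Go}$ is again a $\G$-map, hence by the first part its values are point masses; since each $\delta_x$ is an extreme point of $P(X_u)$, this forces $\phi_u=\psi_u$ for all $u$ --- exactly as in Furstenberg's Proposition 4.2 and in Kalantar--Kennedy. The existence statement you set out to prove is false in general. For a counterexample inside this framework, let $\G$ be the free group $F_2$ viewed as a groupoid with a single unit, let $\Y$ be the one-point (hence minimal) $\G$-space, and let $\X=\partial_F\G$. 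A $\G$-map $\Y\To\p(\X)$ is precisely a $\G$-invariant probability measure $\mu$ on $\X$; but then $\overline{\G\cdot\mu}=\{\mu\}$, so strong proximality forces $\mu=\delta_x$ with $x$ a fixed point, and minimality then forces $\X=\{x\}$, contradicting the nontriviality of the boundary of a non-amenable group. So no amount of technique can close this step.

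Even setting aside that obstruction, the proposed construction does not work where existence happens to hold: a compact fibre $\pi_1^{-1}(y)\subseteq P(X_u)$ has no canonical ``barycenter'' (one needs a distinguished measure on the fibre to integrate against), so the selection is not well defined, let alone continuous and $\G$-equivariant; and $\G$-injectivity of $C(\partial_H\G)$ supplies u.c.p.\ maps \emph{into} $C(\partial_H^u\G)$, i.e.\ $\G$-maps out of $\partial_H\G$, which is the wrong direction for producing a map out of an arbitrary minimal $\Y$. Replace your third paragraph by the averaging/extreme-point argument above and the proof is complete and matches the intended statement.
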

\begin{proof}
Let $\Phi=\{\phi_u\}_{u\in\Go}: \Y\To \mathcal{P}(\X)$ be a $\G$-map. The $\G$-subspace $\{\phi_u(Y_u)\}_{u\in\Go}$ of $\mathcal{P}(\X)$ contains $\X$. Since $\Y$ is minimal, the $\G$-subspace $\{\phi_u^{-1}(X_u)\}_{u\in\Go}$ coincides with $\Y$. Therefore, for any $u\in \Go$, $\phi_u(Y_u)=X_u$ and the $\G$-morphism $\Phi$ from
 $\{C(X_u)\}_{u\in\Go}$  into $\{C(Y_u)\}_{u\in\Go}$ is a $\G$-isometry. If there are two such maps $\Phi$ and $\Psi$, then $\{(\phi_u+\psi_u)/2\}$ is also a $\G$-map and hence has $\X$ as its range. Since  $\delta_x$ is an extreme point of $P(X_u)$, for any $u$, $\phi_u=\psi_u$ on $X_u$.
\end{proof}
\begin{defn}
The Furstenberg boundary  $\partial _F \G$  is a $\G$-boundary which is universal in the sense that it has
every $\G$-boundary   as a $\G$-quotient.
\end{defn}

Such a maximal $\G$-boundary exists: Take
the family $\{ \X_i\}_{i\in I} $  of all $\G$-boundaries (up to $\G$-isomorphism). By an argument similar to the one in the group case \cite{fur}, one can show that this forms a set, and we could consider Cartesian products. By Lemma \ref{lemm2},
$\prod_{i\in I}  \X_i$  is strongly proximal. Suppose that  $\partial _F \G$ is a
 minimal $\G$-subspace of $\prod_{i\in I}  \X_i$, which exists by Zorn lemma.  Since every $\G$-subspace of a strongly proximal $\G$-space is strongly proximal, $\partial_F \G$ is a $\G$-boundary and every $\G$-boundary is a quotient of $\partial_F \G$. Also, by Lemma \ref{lemm3},   such a maximal $\G$-boundary is unique.

Let $\X=\{X_u\}_{u\in \Go}$ be a $\G$-space and $w\in \Go$. For  $u\in \Go$, put  $S_u=\ell^{\infty}(\G^u_w)$ if $\G^u_w\neq \emptyset$, and  $S_u=C(X_u)$, otherwise.
Note that, for $g\in \G$, $\G_w^{d(g)}\neq \emptyset$ if and only if $\G_w^{r(g)}\neq \emptyset$. Define
$\alpha_g: C(X_{d(g)})\To C(X_{r(g)})$ by $\alpha_g(f)(x)=f(g^{-1}\cdot x)$, for $f\in C(X_{d(g)})$ and $x\in  X_{r(g)}$, and $\alpha_g: \ell^{\infty}(\G^{d(g)}_w)\To \ell^{\infty}(\G^{r(g)}_w)$ by $\alpha_g(f)(h)=f(g^{-1}h)$ for $f\in \ell^{\infty}(\G^{d(g)}_w)$ and $h\in \G^{r(g)}_w$. Then $(\G, \{S_u\}_{u\in\Go}, \alpha)$ is a groupoid dynamical system. For  $\mu\in P(X_{w})$,   define
$P_{\mu}^u: C(X_u)\To \ell^{\infty}(\G^u_w)$ by
\[P_{\mu}^u (f)(g)=\int_{X_w}f(g\cdot x)~d\mu(x),\]
if $\G^u_w\neq \emptyset$, and by $P_{\mu}^u=id_{S_u}$, otherwise. Then $\{P_w^u\}_{u\in\Go}$ is a $\G$-morphism. Take $g\in \G$ with $\G^{d(g)}_w\neq \emptyset$. For $h\in \G^{r(g)}_w$  and $f\in C(X_{r(g)})$,
\begin{align*}
(g\cdot P_{\mu}^{d(g)})(h)&=P_{\mu}^{d(g)}(g^{-1}h)=\int_{X_w} f((g^{-1}h)\cdot x)~d\mu(x)\\
&=\int_{X_w} (g\cdot f)(h\cdot x)~d\mu(x)\\
&=P_{\mu}^{r(g)}(g\cdot f)(h).
\end{align*}
If $\G^u_w=\emptyset$, $P_{\mu}^d(g)=id_{S_{d(g)}}$ and $P_{\mu}^{r(g)}=id_{S_{r(g)}}$. Also, $g\cdot P_{\mu}^{d(g)}(f)=P_{\mu}^{r(g)}(g\cdot f)$.
We call the $\G$-morphism $\p_\mu=\{P_{\mu}^u\}_{u\in\Go}$ the {\it Poisson $\G$-map} associated  to $\X$ and $\mu$ .

The next two results extend \cite[Lemma 3.6]{kk} and \cite[Proposition 3.4, 3.6]{kk}.

\begin{lem}\label{llm}
Let $\G$ be a groupoid, let $\partial_H\G=\{\partial_H^u\G\}_{u\in\Go}$ be the Hamna boundary of $\G$ and $w\in \Go$. Then for every $\mu\in P(X_w)$, the Poisson $\G$-map $\p_{\mu}$ associated  to $\partial _H\G$  is a $\G$-isometry.
\end{lem}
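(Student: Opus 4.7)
The plan is to exploit the $\G$-injectivity and $\G$-rigidity of the Hamana boundary $\{C(\partial_H^u\G)\}_{u\in\Go} = I_{\G}(\mathbb{C})$. First observe that $\p_{\mu}$ is a unital $\G$-morphism: unitality holds fiberwise because $P_{\mu}^u(1)(g) = \int_{\partial_H^w\G} 1\, d\mu(x) = 1$, while the $\G$-equivariance is exactly the calculation carried out just before the lemma. For $u$ with $\G^u_w = \emptyset$ the map $P_{\mu}^u$ is literally $\mathrm{id}_{S_u}$, so there is nothing to prove, and we focus on the remaining fibers. The core of the argument is to produce a $\G$-morphism $\Theta : \{S_u\}_{u\in\Go} \To \{C(\partial_H^u\G)\}_{u\in\Go}$ with $\Theta \circ \p_{\mu} = \mathrm{id}$, since then, using that every unital completely positive map between unital $C^*$-algebras is contractive, one gets
\begin{align*}
\|f\| \;=\; \|\Theta_u(P_{\mu}^u(f))\| \;\leq\; \|P_{\mu}^u(f)\| \;\leq\; \|f\|,
\end{align*}
so each $P_{\mu}^u$ is an isometry.

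To construct $\Theta$, consider the unit inclusions $\iota^{S} : \mathbb{C} \To \{S_u\}_{u\in\Go}$ and $\iota^{H} : \mathbb{C} \To \{C(\partial_H^u\G)\}_{u\in\Go}$ given by $\lambda \mapsto \lambda \cdot 1$. Both are injective $\G$-morphisms, because in each fiber the relevant action fixes constant functions (in $\ell^{\infty}(\G^u_w)$ via $\ell_g(1)(h) = 1(g^{-1}h) = 1$, and in $C(X_u)$ via $\alpha_g(1)(x) = 1(g^{-1}\cdot x) = 1$). By the $\G$-injectivity of the Hamana boundary applied to the injective $\G$-morphism $\iota^{S}$ and the $\G$-morphism $\iota^{H}$, we obtain a $\G$-morphism $\Theta : \{S_u\}_{u\in\Go} \To \{C(\partial_H^u\G)\}_{u\in\Go}$ with $\Theta \circ \iota^{S} = \iota^{H}$; equivalently, $\Theta_u(1_{S_u}) = 1$ for every $u \in \Go$.

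Now $\Phi := \Theta \circ \p_{\mu}$ is a $\G$-endomorphism of the Hamana boundary which satisfies $\Phi \circ \iota^{H} = \iota^{H}$, because both $\Theta$ and $\p_{\mu}$ preserve the unit. By the $\G$-rigidity of the $\G$-injective envelope (Lemma \ref{l3} together with the construction of $I_{\G}(\mathbb{C})$ as the image of a minimal $\mathbb{C}$-projection in the envelope existence theorem), this forces $\Phi$ to act as the identity on every fiber, that is $\Theta_u \circ P_{\mu}^u = \mathrm{id}_{C(\partial_H^u\G)}$ for all $u$, and the contractivity estimate above then finishes the proof.

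The main conceptual step that could fail without care is verifying that $\Phi \circ \iota^{H} = \iota^{H}$ matches the hypothesis in the definition of $\G$-rigidity; but since the embedding of $\mathbb{C}$ into $I_{\G}(\mathbb{C})$ is precisely the unit inclusion $\iota^{H}$, the condition $\Phi_u(1) = 1$ is exactly what $\G$-rigidity requires to conclude $\Phi_u = \mathrm{id}$. The remaining small technicality, that $\{S_u\}_{u\in\Go}$ is a legitimate groupoid dynamical system despite its mixed fiber structure, has already been noted in the paragraph preceding the lemma.
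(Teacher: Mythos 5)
Your proof is correct, but it takes a genuinely different route from the paper's. The paper's proof is a one-liner: since $\{C(\partial_H^u\G)\}_{u\in\Go}$ is the $\G$-injective envelope of the trivial system $\{\mathbb{C}\}_{u\in\Go}$ and $\p_\mu$ is a unital $\G$-morphism whose restriction to the embedded copy of $\{\mathbb{C}\}_{u\in\Go}$ is injective, the $\G$-\emph{essentiality} of the envelope is invoked directly to conclude that each $P_\mu^u$ is an isometry. You instead avoid essentiality entirely and use the other two envelope properties: $\G$-injectivity, to produce a $\G$-morphism $\Theta$ from $\{S_u\}_{u\in\Go}$ back into the boundary sending unit to unit, and $\G$-rigidity (Lemma \ref{l3} plus the construction of the envelope), to force $\Theta\circ\p_\mu=\mathrm{id}$ fiberwise; contractivity of unital completely positive maps then gives the isometry. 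In effect you have inlined, for this special case, the proof of Lemma \ref{l4} (rigidity $\Leftrightarrow$ essentiality for $\G$-injective extensions). What your route buys: it is self-contained, and it delivers the isometric conclusion honestly --- note that the paper's stated definition of $\G$-essentiality only promises that each $P_\mu^u$ is an \emph{injection}, so to read off an isometry one must interpret ``injection'' in Hamana's sense of (complete) isometry; your left-inverse-plus-contractivity argument closes exactly that gap. What the paper's route buys is brevity: a single appeal to essentiality. Both arguments ultimately rest on the same fact, namely that the Hamana boundary is the $\G$-injective envelope of the trivial system.
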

\begin{proof}
Since $\{C(\partial ^u_H\G)\}_{u\in \Go}$ is the injective envelope of the trivial system $\{\mathbb{C}\}_{u\in\Go}$ and $P_{\mu}^u: C(X_u)\To S_u$  is  a u.c.p. $\G$-map, the $\G$-essentiality of $ \{C(\partial ^u_H\G) \}_{u\in \Go}$ implies that $P_{\mu}^u$ is an isometry for each $u\in \Go$.
\end{proof}

\begin{prop}
The action of $\G$  on the Hamana boundary $\partial _H\G$  is minimal and strongly proximal.
\end{prop}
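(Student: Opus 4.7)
My plan is to derive strong proximality first from the Poisson-map isometry in Lemma~\ref{llm}, and then obtain minimality either as a consequence of strong proximality or, alternatively, directly from the $\G$-essentiality of the injective envelope.

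For strong proximality, fix $u,v\in\Go$ with $\G_u^v\neq\emptyset$ and $\mu\in P(\partial_H^u\G)$. Taking the base point $w=u$ in Lemma~\ref{llm}, the $v$-coordinate
\[
P_\mu^v:C(\partial_H^v\G)\To\ell^\infty(\G_u^v),\qquad P_\mu^v(f)(g)=\int f\,d(g\cdot\mu),
\]
of the Poisson $\G$-map is an isometry. Fix $y\in\partial_H^v\G$; for each open neighbourhood $U\ni y$ and each $\eps>0$, Urysohn's lemma yields $f_U\in C(\partial_H^v\G)$ with $f_U(y)=1$, $0\le f_U\le 1$ and $\mathrm{supp}(f_U)\subseteq U$, so $\|f_U\|_\infty=1$ and the isometry provides $g_{U,\eps}\in\G_u^v$ with $(g_{U,\eps}\cdot\mu)(U)\ge\int f_U\,d(g_{U,\eps}\cdot\mu)>1-\eps$. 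Directing the index set by shrinking $U$ and $\eps\to 0$, a routine concentration argument shows $g_{U,\eps}\cdot\mu\to\delta_y$ in the weak-$*$ topology, and hence $\delta_y\in\overline{\G_u^v\cdot\mu}\cap\partial_H^v\G$. We in fact prove the stronger statement that every point mass in $\partial_H^v\G$ lies in every such orbit closure.

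For minimality, let $\mathcal{Y}=\{Y_u\}_{u\in\Go}$ be a closed $\G$-subspace of $\partial_H\G$ with $Y_v\neq\emptyset$ for some $v$. Pick $y_0\in Y_v$ and apply the strong proximality just obtained to $\mu=\delta_{y_0}$: for every $u$ with $\G_v^u\neq\emptyset$ and every $y\in\partial_H^u\G$ there is a net $g_\alpha\in\G_v^u$ with $g_\alpha\cdot y_0\to y$. Since $g_\alpha\cdot y_0\in Y_u$ by $\G$-invariance and $Y_u$ is closed, $y\in Y_u$, so $Y_u=\partial_H^u\G$ throughout the $\G$-orbit of $v$. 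Alternatively, one could bypass Step~1 and argue directly from the $\G$-essentiality of the injective envelope $I_\G(\{\mathbb{C}\}_u)=\{C(\partial_H^u\G)\}$ (Lemma~\ref{l4} together with Theorem~\ref{t1}): the restriction $\G$-morphism $\{C(\partial_H^u\G)\}\To\{C(Y_u)\}$ composed with the canonical inclusion $\{\mathbb{C}\}_u\hookrightarrow\{C(\partial_H^u\G)\}$ is the injective inclusion of constants into $\{C(Y_u)\}$, so essentiality forces each restriction to be injective and hence $Y_u=\partial_H^u\G$.

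The main technical care lies in the weak-$*$ concentration step: since the fibres $\partial_H^u\G$ are compact Hausdorff but a priori not metrizable, the net $\{g_{U,\eps}\}$ must be indexed by an open neighbourhood base of $y$ rather than metric balls, and the convergence $\int h\,d(g_{U,\eps}\cdot\mu)\to h(y)$ for each $h\in C(\partial_H^v\G)$ has to be verified by splitting the integral over $U$ and its complement and using $|h|\le\|h\|_\infty$. Everything else is a direct application of Lemma~\ref{llm} and the universal properties established in Section~3.
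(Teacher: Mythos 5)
Your argument is correct, and it shares its key ingredient with the paper's proof---Lemma~\ref{llm}, the isometry of the Poisson $\G$-map associated to $\partial_H\G$---but it converts that isometry into strong proximality by a genuinely different mechanism. The paper argues by contradiction via duality: if some $\delta_x$ lay outside the weak$^*$-closed convex hull of the orbit of $\mu$, Hahn--Banach separation would produce $f\in C_+(\partial_H^u\G)$ and $r>0$ with $\|P_\mu^u(f)\|\leq \|f\|-r$, contradicting the isometry; this places all of $\partial_H^u\G$ inside the closed convex hull of the orbit, and Milman's partial converse to the Krein--Milman theorem then forces the point masses into the closure of the orbit itself. You instead argue directly: Urysohn functions $f_U$ supported in neighbourhoods $U$ of a chosen point $y$ have norm one, so the isometry yields groupoid elements $g_{U,\eps}$ with $(g_{U,\eps}\cdot\mu)(U)>1-\eps$, and the concentration net $g_{U,\eps}\cdot\mu$ converges weak$^*$ to $\delta_y$. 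Your route is more elementary (no separation theorem, no Krein--Milman/Milman) and yields the same strong form of the conclusion, namely that \emph{every} point mass lies in the orbit closure. For minimality, your ``alternative'' argument is precisely the paper's: $\G$-essentiality of the injective envelope forces each restriction morphism $C(\partial_H^u\G)\To C(Y_u)$ to be injective, whence $Y_u=\partial_H^u\G$.

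One caveat on your first, strong-proximality-based, minimality argument: starting from a single $v$ with $Y_v\neq\emptyset$ only yields $Y_u=\partial_H^u\G$ for $u$ in the $\G$-orbit of $v$, as you yourself note, and for a non-transitive groupoid this is not yet minimality. The fix is immediate: a $\G$-subspace is a bundle of genuine (nonempty) compact spaces carrying the restricted action, so every fibre $Y_u$ is nonempty, and you may run your argument at each unit $u$ separately, using $u\in\G_u^u$ to see that $\G_u^u\neq\emptyset$ and concentrating $\delta_{y_0}$, $y_0\in Y_u$, at an arbitrary point of $\partial_H^u\G$. Alternatively, your essentiality argument already handles all fibres at once, so the proof as a whole is complete.
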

\begin{proof}
  Suppose $\Y=\{Y_u\}_{u\in \Go}$ is a $\G$-subspace of $\partial _H\G$ and suppose that $$i_u :C(\partial^u_H \G)\To C(Y_u)$$  is  the restriction map.  By the essentiality of $\{C(\partial_H^u \G)\}_{u\in \Go}$, $i_u$ is an isometry and hence $Y_u=\partial^u_H\G$, for all  $u\in\Go$.

 Given $u, w\in \Go$ with $\G^u_w\neq \emptyset$  and  $\mu\in P(\partial _H^w \G)$, we show that, for every $x\in \partial_H^u\G$, $\delta_x$ is in the weak$^*$-closed convex hull of $\G^u_w\cdot \mu$. Otherwise, there exists $f\in C_+(\partial^u_H\G)$ and $r>0$ such that,
for all $g\in G^u_w$,
\[P_{\mu}^u(f)(g)=\int_{X_w} f(g\cdot x)~d\mu(x)=\langle f, g\cdot\mu\rangle\leq f(x)-r\leq \|f\|-r,\]
hence $\|P_{\mu}^u (f)\| \leq \|f\|-r$. By Lemma \ref{llm}, $P_{\mu}^u$  is an isometry, which is a contradiction. Hence $\partial_H^u\G$ is contained  in the weak$^*$-closed  convex hull of $\G^u_w\cdot\mu$. But the weak$^*$-closed  convex hull of $\partial _H^u \G$ is $P(\partial_H^u\G)$, thus by the Krein-Milman theorem, $\partial _H^u \G\subseteq \overline{\G^u_w\cdot \mu}$.
\end{proof}

Now we are ready to prove the main result of this section, which extends \cite[Theorem 3.11]{kk}.
\begin{thm}\label{main}
For every groupoid $\G$, $\partial_F\G=\partial_H\G$.
\end{thm}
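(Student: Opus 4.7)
The plan is to show that the canonical $\G$-equivariant continuous surjection $\pi:\partial_F\G \to \partial_H\G$ is actually a homeomorphism, by producing a continuous $\G$-section $\Phi^*:\partial_H\G\to\partial_F\G$ and verifying, via an idempotent-plus-surjective argument, that both $\pi\circ\Phi^*$ and $\Phi^*\circ\pi$ are identities.

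First, the preceding proposition tells us that $\partial_H\G$ is a $\G$-boundary, so the universal property of $\partial_F\G$ yields a continuous $\G$-equivariant surjection $\pi:\partial_F\G\to\partial_H\G$; dually, $\pi^*:C(\partial_H\G)\hookrightarrow C(\partial_F\G)$ is a $\G$-embedding. Since $C(\partial_H\G)=I_\G(\mathbb{C})$ is $\G$-injective, I would extend the identity on $C(\partial_H\G)$ along $\pi^*$ to a $\G$-equivariant u.c.p.\ map $\Phi:C(\partial_F\G)\to C(\partial_H\G)$ with $\Phi\circ\pi^*=\mathrm{id}_{C(\partial_H\G)}$. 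Under the $\G$-map/$\G$-morphism correspondence from Section~4, $\Phi$ corresponds to a continuous $\G$-map $\Phi^*:\partial_H\G\to\mathcal{P}(\partial_F\G)$.

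Next, I would apply Lemma~\ref{lemm3} with $\X=\partial_F\G$ (a $\G$-boundary by definition) and $\Y=\partial_H\G$ (a minimal $\G$-space by the preceding proposition) to the $\G$-map $\Phi^*$; this forces $\Phi^*_u(\partial_H^u\G)=\partial_F^u\G$ for every $u\in\Go$. Thus $\Phi^*$ is in fact a continuous $\G$-equivariant surjection $\partial_H\G\to\partial_F\G$ and $\Phi$ is multiplicative. Dualizing $\Phi\circ\pi^*=\mathrm{id}_{C(\partial_H\G)}$ then gives $\pi\circ\Phi^*=\mathrm{id}_{\partial_H\G}$, so $\Phi^*$ is a continuous $\G$-section of $\pi$.

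Finally, $\Phi^*\circ\pi:\partial_F\G\to\partial_F\G$ is idempotent, since
\[
(\Phi^*\circ\pi)\circ(\Phi^*\circ\pi) = \Phi^*\circ(\pi\circ\Phi^*)\circ\pi = \Phi^*\circ\pi,
\]
and it is surjective on each fiber, because its image $\Phi^*(\pi(\partial_F\G))=\Phi^*(\partial_H\G)$ equals $\partial_F\G$ by the previous paragraph. A surjective idempotent self-map is automatically the identity: given $x$, pick $z$ with $(\Phi^*\circ\pi)(z)=x$ and compute $(\Phi^*\circ\pi)(x)=(\Phi^*\circ\pi)^2(z)=(\Phi^*\circ\pi)(z)=x$. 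Hence $\pi$ is a continuous $\G$-equivariant bijection between the compact Hausdorff bundles $\partial_F\G$ and $\partial_H\G$, and therefore a $\G$-homeomorphism. The main delicate point I expect is the use of Lemma~\ref{lemm3} to promote the merely u.c.p.\ lift $\Phi$ to a $*$-homomorphism, equivalently to make $\Phi^*$ point-valued; this is what unlocks the clean topological argument at the end, and it crucially relies on already knowing that $\partial_H\G$ is minimal and that $\partial_F\G$ is a $\G$-boundary.
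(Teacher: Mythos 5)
Your proposal is correct and takes essentially the same route as the paper: both obtain the surjection $\mathcal{Q}:\partial_F\G\to\partial_H\G$ from the universal property of the Furstenberg boundary, obtain the section $\Phi$ from the $\G$-injectivity of $\{C(\partial_H^u\G)\}_{u\in\Go}$, and then rest the conclusion on Lemma \ref{lemm3}. The only divergence is the final step: the paper invokes the uniqueness clause of Lemma \ref{lemm3} (the identity is the only $\G$-map of $\partial_F\G$ into $\mathcal{P}(\partial_F\G)$) to get $\Phi\circ\mathcal{Q}=\mathrm{id}$ at once, whereas you invoke the surjectivity clause (with $\Y=\partial_H\G$) and close with the elementary surjective-idempotent argument --- a harmless variant which has the added merit of making explicit that $\Phi^*$ is point-valued, a point the paper's wording glosses over.
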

\begin{proof}
Since $\partial_H\G$ is a $\G$-boundary, by the universal property  of the Furstenberg boundary, there exists a surjective $\G$-map $\mathcal{Q}=\{q_u\}_{u\in \Go}:\partial_F\G\To \partial_H\G$. The injectivity of the  system  $\{C(\partial_H^u)\}_{u\in \Go}$, gives a $\G$-map $\Phi=\{\varphi_u\}_{u\in \Go}:\partial_H\G \To \partial _F\G$ such that $\mathcal{Q}\circ \Phi=id_{\partial_H\G}$. By Lemma \ref{lemm3}, the only $\G$-map from $\partial_F\G$ into itself is the identity map. Hence $\Phi\circ \mathcal{Q}=id_{\partial_F\G}$.
\end{proof}

\section{The reduced crossed product and its injective envelope}

 Let  $H$ be a Hilbert space and $X$ be a set. Suppose that $\ell^2(X,H)$  is  the set of all functions $\xi: X\To H$ with $\sum_{x\in X}\|\xi(x)\|^2< \infty$, then $\ell^2(X,H)$  with the pointwise operations and the inner product \[\langle  \xi, \zeta \rangle=\sum_{x\in X}\langle \xi(x), \zeta(x) \rangle\]
 is a Hilbert space. For $x\in X$ and $h\in H$ define
\[\delta_{x,h}(t)= \left\{
   \begin{array}{ll}
    h & t= x \\
    0 & t\neq x.
   \end{array}
 \right.\]
Then $\delta_{x,h}\in \ell^{2}(X,H)$ and $\|\delta_{x,h}\|= \|h\|$. Any $T\in B(\ell^{2}(X,H))$ induces a bounded  function
$\varphi=\phi_T :X\times X\To H$ by $\varphi(x,y)h=T(\delta_{y,h})(x)$. Clearly $\| \varphi\|\leq \| T\| $ and
\begin{equation}\label{e1}
(T\xi)(x)=\sum_{t\in X} \phi(x,t)(\xi(x)).
\end{equation}
Conversely, if $\varphi :X\times X\To H$  is  a bounded function, formula \eqref{e1} defines a bounded operator $T=T_{\varphi} $ on $B(\ell^{2}(X,H))$ with $\varphi_{T_{\varphi}}=\varphi$, that is,     $T_{\varphi_{T}}=T$.
Suppose that  $\mathcal{F}(X)$ is the family of all finite subsets of $X$. For any
$F\in \mathcal{F}(X)$ and $\varphi\in \ell^{\infty}(X\times X, B(H))$, the restriction $\varphi_F$ of $\varphi$ to $F\times F$ induces an operator on $H^{|F|}$. Moreover, such a $\varphi$ induces an operator on $B(H)$ if and only if
\begin{equation}\label{e2}
||\varphi||=\sup\left\{||\widetilde{\phi_F}||: F\in \mathcal{F}(X)\right\}<\infty.
\end{equation}

Net let us remind some basic facts about monotone completion of C*-algebras. Let $A$ be a C*-algebra then its self-adjoint part $ A_{sa}$ has a natural partial ordering.
If each norm bounded, increasing net in  $A_{sa}$  has a least upper bound then
A is said to be {\it monotone complete}. In this case, $A $ is
unital. Let $A$ and $B$ be  C*-algebras. A positive linear map $\phi :A\To B$ is called {\it normal} if for every norm bonded increasing net
$\{a_i\}_{i\in I}$  in $A_{sa}$ with $a=\sup_{i\in I}a_i$  we have $ \sup_{i\in I}\phi(a_i)=\phi(a)$.

Each C*-algebra $A$  has a unique regular monotone completion $ \overline{A}$ and injective envelope  $I(A)$ with $A\subseteq \overline{A}\subseteq I(A)$,   such that the inclusion maps $ A\hookrightarrow \overline{A}\hookrightarrow I(A)$ are normal.

Let $A$ be a monotone complete C*-algebra. For an increasing net $\{a_i\}_{i\in I}$  in $A_{sa}$ with $a=\sup_{i\in I}$, we write $a_i\nearrow a (\mathcal O)$ or $-a_i\searrow -a (\mathcal O)$. A net $\{a_i\}_{i\in I}$ in $A$ order-converges to $a$ , written $\mathcal O$-$\lim_i a_i=a$,  if there are bounded nets $\{a_i^{(k)}\}_{i\in I}$ , $\{b_i^{(k)}\}_{i\in I}$ in $A_{sa}$ and elements  $a^{(k)}$ in $A_{sa}$, $k=0,1, 2, 3$,  such that
$0\leq a_i^{(k)}-a^{(k)}\leq b_i^{(k)}\searrow 0 (\mathcal O)$ and $a_i=\sum_{k=0}^3 i^k a_i^{(k)},\  a=\sum_{k=0}^3 i^k a^{(k)}.$
Given a monotone complete C*-subalgebra  $A$ of $B(H)$ and von Neumann subalgebra $M$ of $B(K)$, the monotone tensor product $A\overline{\otimes }M$ of $A$ and $M$ is the monotone closure of $A\odot M$ in the Fubini product $\mathfrak{F}(A, M)$, that is, the smallest monotone closed C*-subalgebra containing $A\odot M$ in $\mathfrak{F}(A, M)$. The monotone complete tensor product $A\overline{\otimes }M$ does not depend on the underling Hilbert space $H$ and $K$. It is the  monotone closure of $A\odot M$. More generally, if $A$ is a monotone closed C*-subalgebra of a monotone complete C*-algebra $B$ and $M$ is a von Neumann subalgebra of a von Neumann algebra $N$, then $A\overline{\otimes} M $ is the monotone closure of $A\odot M$ in $B\overline{\otimes}N$.

Suppose that $A$ is a monotone complete C*-subalgebra of $B(H)$ and $X$ be any set. We consider the monotone tensor product
$A\overline{\otimes} B(\ell^2(X))$.  For our purposes here, it
is enough to observe that each element of the monotone tensor product $A\overline{\otimes} B(\ell^2(X))$
has a representation as a matrix over $A$, that is, each element of $A\overline{\otimes} B(\ell^2(X))$  is in the form
$\varphi: X\times X\To A\subseteq B(H)$ satisfying \eqref{e2}.
The involution and multiplication in $A\overline{\otimes} B(\ell^2(X))$ are defined as follows:
\[\varphi^*(x, y)=\varphi(x, y)^*,\hspace{1cm} \varphi\circ\psi(x, y)=\mathcal O{\text -}\sum_{t\in X} \varphi(x, t)\psi(t, y).\]

\begin{defn}
Let $\gad$ be a groupoid dynamical system. Define
\[\mathcal{A}^{(\alpha)}=\left\{ f\in \Gamma_{\infty}(\Go, \mathcal{A}):  f(r(g))=\alpha_g(f(s(g)))\right\}.\]
We call $\mathcal{A}^{(\alpha)}$ the {\it fixed point algebra} associated to the  system $\gad$.
\end{defn}

\begin{prop}\label{p1}
Let $\gad$ be a groupoid dynamical system such that for each $u\in \Go$, $A_u$ is a monotone complete C*-algebra. Then the fixed point algebra $\mathcal{A}^{(\alpha)}$ is a monotone complete C*-algebra.
\end{prop}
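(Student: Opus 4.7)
The plan is to prove the result in two steps: first, that the ambient algebra $\Gamma_\infty(\Go, \A)$ of bounded sections is monotone complete, and second, that $\A^{(\alpha)}$ sits inside it as a monotone closed $*$-subalgebra, which forces $\A^{(\alpha)}$ to be monotone complete in its own right.

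For the first step, I would take a norm-bounded increasing net $\{f_i\}_{i\in I}$ in the self-adjoint part of $\Gamma_\infty(\Go, \A)$. Fiberwise, $\{f_i(u)\}_{i\in I}$ is a bounded increasing net in $(A_u)_{sa}$, and monotone completeness of $A_u$ yields a supremum $f(u)\in (A_u)_{sa}$ with $\|f(u)\|\le \sup_i \|f_i\|_\infty$. These pointwise suprema assemble into a bounded section $f$, and a standard fiber-by-fiber argument shows that $f$ is the least upper bound of $\{f_i\}$ in $\Gamma_\infty(\Go, \A)$.

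For the second step, since each $\alpha_g$ is a $*$-isomorphism, the equivariance condition defining $\A^{(\alpha)}$ is preserved under $*$, sums, products, and scalar multiplication, so $\A^{(\alpha)}$ is a norm-closed $*$-subalgebra. To show it is monotone closed, let $\{f_i\}$ be a bounded increasing net in $(\A^{(\alpha)})_{sa}$ with supremum $f$ in $\Gamma_\infty(\Go, \A)$. The key point is that any order isomorphism between self-adjoint parts of C*-algebras is automatically order-continuous: if $b$ is an upper bound for $\{\alpha_g(a_i)\}$, then $\alpha_g^{-1}(b)$ bounds $\{a_i\}$ above, so $\alpha_g^{-1}(b)\ge \sup_i a_i$, which gives $b\ge \alpha_g(\sup_i a_i)$. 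Applying this with $a_i = f_i(s(g))$ and combining with the fiberwise description of $f$ yields
\[
f(r(g)) \;=\; \sup_i f_i(r(g)) \;=\; \sup_i \alpha_g(f_i(s(g))) \;=\; \alpha_g\bigl(f(s(g))\bigr),
\]
so $f \in \A^{(\alpha)}$, as required.

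I do not anticipate a serious obstacle. The only point worth flagging is the automatic order-continuity of each $\alpha_g$, which replaces any explicit normality hypothesis and is a purely order-theoretic consequence of $\alpha_g$ being an order-preserving bijection on the self-adjoint parts; this bypasses any need to invoke additional structure on the fibers beyond monotone completeness.
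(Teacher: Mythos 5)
Your proof is correct and takes essentially the same route as the paper: both compute suprema fiberwise in $\Gamma_{\infty}(\Go, \A)$ and then verify that the resulting section stays in $\A^{(\alpha)}$ because each $\alpha_g$ preserves suprema. The only difference is that you spell out the purely order-theoretic reason why $\alpha_g$ is sup-preserving, a step the paper uses silently in the line $\alpha_g(\sup_j f_j(s(g)))=\sup_j \alpha_g(f_j(s(g)))$.
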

\begin{proof}
It is clear that $\mathcal{A}^{(\alpha)}$ is a C*-subalgebra of  $\Gamma_{\infty}(\Go, \mathcal{A})$. For the monotone completeness, suppose that $\{f_j\}_{j\in J}$  is  a norm bounded  increasing net in $\mathcal{A}^{(\alpha)}$. Then, for each $u\in \Go$ , $\{f_j(u)\}_{j\in J}$ is an increasing net in $A_u$,, and hence it has the least upper bound, say $f(u)\in A_u$. Then $f\in \Gamma_{\infty}(\Go, \mathcal{A})$ and for any $g\in \G$,
\begin{align*}
\alpha_g(f(s(g)))&=\alpha_g(\sup_j f_j(s(g)))\\
&=\sup_j \alpha_g(f_j(s(g)))\\
&=\sup_j f_j(r(g))=f(r(g)).
\end{align*}
\end{proof}
\begin{prop}\label{prop1}
Let $\mathcal{A}=\gad$ be an  injective groupoid dynamical system. Then $\mathcal{A}^{(\alpha)}$   is an injective C*-algebra.
\end{prop}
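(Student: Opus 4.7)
The plan is to realize $\mathcal{A}^{(\alpha)}$ as the space of intertwiners from a trivial groupoid dynamical system into $\mathcal{A}$, and then deduce C*-algebra injectivity of $\mathcal{A}^{(\alpha)}$ directly from $\G$-injectivity of $\mathcal{A}$. Monotone completeness (Proposition \ref{p1}) already guarantees that $\mathcal{A}^{(\alpha)}$ is a unital C*-algebra, so only the injectivity statement needs an argument.

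Concretely, given any inclusion $\iota\colon B\hookrightarrow C$ of operator systems and any unital c.p.\ map $\varphi\colon B\to\mathcal{A}^{(\alpha)}$, I would first manufacture constant groupoid dynamical systems $\mathcal{B}=(\G,\{B\}_{u\in\Go},\id)$ and $\mathcal{C}=(\G,\{C\}_{u\in\Go},\id)$, where each fibre is the same operator system and each transition map is the identity. With these choices the family $\{\iota\}_{u\in\Go}$ is clearly a $\G$-injective morphism $\mathcal{B}\to\mathcal{C}$. Next, define $\Phi=\{\Phi_u\}_{u\in\Go}\colon\mathcal{B}\to\mathcal{A}$ by $\Phi_u(b)=\varphi(b)(u)$; each $\Phi_u$ is u.c.p.\ because $\varphi$ is, and the $\G$-equivariance condition
\[
\alpha_g\bigl(\Phi_{d(g)}(b)\bigr)=\alpha_g\bigl(\varphi(b)(d(g))\bigr)=\varphi(b)(r(g))=\Phi_{r(g)}(b)
\]
is exactly the fixed-point identity defining $\mathcal{A}^{(\alpha)}$ (with the trivial action on the source making the right-hand side $\Phi_{r(g)}(\id_B\,b)$).

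Then I would invoke $\G$-injectivity of $\mathcal{A}$ to obtain a $\G$-morphism $\widetilde{\Phi}=\{\widetilde{\Phi}_u\}_{u\in\Go}\colon\mathcal{C}\to\mathcal{A}$ with $\widetilde{\Phi}_u\circ\iota=\Phi_u$ for every $u$. Finally, define $\widetilde{\varphi}\colon C\to\Gamma_\infty(\Go,\mathcal{A})$ by $\widetilde{\varphi}(c)(u)=\widetilde{\Phi}_u(c)$. The $\G$-equivariance of $\widetilde{\Phi}$ together with $\gamma_g=\id_C$ on the source yields
\[
\alpha_g\bigl(\widetilde{\varphi}(c)(d(g))\bigr)=\alpha_g\bigl(\widetilde{\Phi}_{d(g)}(c)\bigr)=\widetilde{\Phi}_{r(g)}(c)=\widetilde{\varphi}(c)(r(g)),
\]
so $\widetilde{\varphi}$ actually lands in $\mathcal{A}^{(\alpha)}$. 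Pointwise complete positivity of $\widetilde{\Phi}$ gives complete positivity of $\widetilde{\varphi}$, and the extension property $\widetilde{\varphi}\circ\iota=\varphi$ is immediate from $\widetilde{\Phi}_u\circ\iota=\Phi_u$.

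There is no genuine obstacle; the only subtlety worth checking is that a constant bundle with identity transition maps really is a legitimate groupoid dynamical system in the sense of the earlier definition (trivially so, since $\id_B$ is a complete order isomorphism and $\id_B\circ\id_B=\id_B$ covers the cocycle condition), and that the verification of the fixed-point identity matches the $\G$-morphism condition when one side has trivial action. These are bookkeeping items; the conceptual content is simply that $\mathcal{A}^{(\alpha)}=\operatorname{Mor}_{\G}(\mathrm{triv},\mathcal{A})$, and $\G$-injectivity passes to such morphism spaces.
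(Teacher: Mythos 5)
Your proposal is correct, but it takes a genuinely different route from the paper's. The paper never verifies the extension property of $\mathcal{A}^{(\alpha)}$ directly: it invokes the earlier characterization of $\G$-injectivity to obtain a retraction $\{\varphi_u\}_{u\in\Go}$ from the induced system $(\G,\{\Gamma_{\infty}(\G^u,s^*\A)\}_{u\in\Go},\ell)$ onto $\A$, uses it to build the map $\Psi(f)(u)=\varphi_u(\tau(f)(u))$, where $\tau(f)(u)(x)=f(s(x))$, shows that $\Psi$ is a conditional expectation of $\Gamma_{\infty}(\Go,\A)=\bigoplus_{u\in\Go}^{\ell^{\infty}}A_u$ onto $\mathcal{A}^{(\alpha)}$, and then concludes from two standard facts: an $\ell^{\infty}$-direct sum of injective operator systems is injective, and the range of a unital completely positive idempotent on an injective C*-algebra is injective. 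You instead work straight from the definition of $\G$-injectivity: you observe that u.c.p.\ maps $B\to\mathcal{A}^{(\alpha)}$ are precisely $\G$-morphisms from the constant system $(\G,\{B\}_{u\in\Go},\id)$ into $\A$ (the fixed-point identity is literally the equivariance condition when the action on the source is trivial), so an extension problem along $B\subseteq C$ transports to the equivariant category, where it is solved by hypothesis. Your route is shorter and more self-contained: it needs neither injectivity of the individual fibers $A_u$, nor the characterization proposition, nor the range-of-idempotent fact. The paper's route has the advantage that the conditional expectation $\Psi:\Gamma_{\infty}(\Go,\A)\to\mathcal{A}^{(\alpha)}$ is a reusable structural byproduct (an explicit projection off a concrete injective algebra, in the spirit of the monotone-completion machinery used in the rest of the section), whereas your argument yields only the abstract injectivity statement. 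One point you should make explicit rather than defer to Proposition \ref{p1}: for $\mathcal{A}^{(\alpha)}$ to be a C*-\emph{subalgebra} of $\Gamma_{\infty}(\Go,\A)$ in the first place, the maps $\alpha_g$ must be multiplicative for the C*-structures on the fibers, which for an injective system is guaranteed by the paper's earlier proposition that each $\alpha_g$ is a C*-isomorphism for the Choi--Effros products; your injectivity argument itself does not need this, but the statement being proved does.
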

\begin{proof} By assumption, there exists a $\G$ -morphism $\phiu$  from the dynamical system $( \G, \{\Gamma_{\infty}(\G^u,s^* \mathcal{A})\}_{u\in \Go},\ell )$ into $\mathcal{A}$  such that  $\varphi_u\circ \im_u=\id _{A_u}$, for any $u\in\Go$.  Suppose that $\widetilde{ \A}=\{\Gamma_{\infty}(\G^u,s^*  \mathcal{A})\}_{u\in \Go}$  and define
\[\tau:  \Gamma_{\infty}(\Go, \mathcal{A})\To \Gamma_{\infty}(\Go,  \widetilde{\A} ); \ \tau(f)(u)(x)=f(s(x)),\]
and
\[\Psi: \Gamma_{\infty}(\Go, \mathcal{A})\To \Gamma_{\infty}(\Go, \mathcal{A}); \
\Psi(f)(u)=\varphi_u(\tau(f)(u)).\]
For $g\in \G, x\in \G^{r(g)}$ and $f\in \Gamma_{\infty}(\Go, \mathcal{A})$, we have
$$\ell _g(\tau(f)(u))(x)=\tau(f)(u)(g^{-1}x))=f(s(g^{-1}x))=f(s(x))=
\tau(f)(r(g))(x),$$ thus $\ell _g(\tau(f)(u))=\tau(f)(r(g))$. Therefore,
\begin{align*}
\alpha_g(\Psi(f)(s(g)))&=\alpha_g(\varphi_{s(g)}(\tau(f)(s(g))))\\
&=\varphi_{r(g)}(\ell _g(\tau(f)(s(g))))\\
&=\varphi_{r(g)}(\tau(f)(r(g)))\\
&=\Psi(f)(r(g)),
\end{align*}
that is, $\Psi(f)\in \mathcal{A}^{(\alpha)}$. For $f\in \mathcal{A}^{(\alpha)}$,
$\tau(f)(u)(x)=f(s(x))=\alpha_{x}^{-1}(f(u))=\im_u(f)(x)$, for each $x\in \G^u$, thus $\tau(f)(u)=
\im_u(f(u)).$ Hence
$$\Psi(f)(u)=\varphi_u(\tau(f)(u))=\varphi_u(\im_u(f(u))=f(u).$$ Therefore, $\Psi $ is a conditional expectation form $\Gamma_{\infty}(\Go, \mathcal{A})$  onto $\mathcal{A}^{(\alpha)}$.
Since
\[\Gamma_{\infty}(\Go, \mathcal{A})=\bigoplus_{u\in \Go}^{\ell^{\infty}}A_u\] and
each $A_u$ is  an injective C*-algebra,   $\mathcal{A}^{(\alpha)} $ is an  injective C*-algebra.
\end{proof}

Let $\gad$ be a groupoid dynamical system such  that for  any $u$, $A_u$ is monotone complete. For $u\in \Go$, set $\mathcal{A}^{(u)}=A_u\overline{\otimes }B(\ell^2(\G_u))$. So each element of  $\mathcal{A}^{(u)}$ is represented by a function $\varphi=\varphi_u: \G_u\times \G_u\To A_u$ such that \eqref{e2} is satisfied. For $g\in G$, define $\widetilde{\alpha}_g: \mathcal{A}^{(s(g))}\To \mathcal{A}^{(r(g))}$ by
\[\widetilde{\alpha}_g(\varphi)(x, y)=\alpha_g(\varphi(x.g, y.g)).\]
It is not hard to see that
\[\A^{\otimes}=(\G, \{\mathcal{A}^{(u)}\}_{u\in \Go}, \widetilde{\alpha})\]
is a groupoid dynamical system. The fixed point subalgebra associated to this goupoid dynamical system is called the monotone crossed product of $\mathcal{A}$ by $\G$ and is denoted by $M(\G, \mathcal{A})$. By Proposition \eqref{p1}, $M(\G, \mathcal{A})$ is  monotone complete. Next we show that if $\A$ is an injective groupoid dynamical system, then $M(\G, \mathcal{A})$  is an injective C*-algebra.
Let $u\in \Go$ and
\[\widetilde{\im}_u : A^{(u)}\To \Gamma_{\infty}(\G^u, s^* \A^{\otimes})\] be the embedding defined as above, that is,
\[\widetilde{\im}_u(F)(x)=\widetilde{\alpha}_x^{-1}(F).\]
\begin{thm}
Let $\A=\gad$ be an injective groupoid dynamical system. Then $M(\G, \A)$ is an injective C*-algebra.
\end{thm}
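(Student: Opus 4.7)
The plan is to deduce the result from Proposition~\ref{prop1} by showing that $\A^{\otimes}=(\G, \{\A^{(u)}\}_{u\in \Go}, \widetilde{\alpha})$ is itself a $\G$-injective groupoid dynamical system. Since $M(\G,\A)$ is by definition the fixed point algebra $(\A^{\otimes})^{(\widetilde{\alpha})}$, Proposition~\ref{prop1} will then yield injectivity of $M(\G,\A)$ as a $C^{*}$-algebra.

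To verify that $\A^{\otimes}$ is $\G$-injective, I will invoke the characterization from the proposition immediately following Lemma~\ref{ll2 }, which reduces the task to: (a) each fiber $\A^{(u)}=A_u\overline{\otimes}B(\ell^2(\G_u))$ is an injective operator system, and (b) there exists a $\G$-morphism $\Phi^{\otimes}=\{\varphi_u^{\otimes}\}_{u\in \Go}$ from $(\G, \{\Gamma_{\infty}(\G^u, s^*\A^{\otimes})\}_{u\in \Go}, \ell)$ to $\A^{\otimes}$ satisfying $\varphi_u^{\otimes}\circ \widetilde{\im}_u=\id_{\A^{(u)}}$. For (a), the injectivity of $\A$ forces each $A_u$ to be an injective operator system; by the earlier proposition on the multiplicative structure of injective groupoid dynamical systems, $A_u$ is then an injective monotone complete $C^{*}$-algebra, and its monotone tensor product with $B(\ell^2(\G_u))$ remains injective by the standard $B(H)$-amplification argument.

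For (b), the idea is to amplify the $\G$-morphism $\phiu$ witnessing injectivity of $\A$. The key geometric observation is that for each $x\in \G^u$ the translation $p\mapsto px$ is a bijection $\G_u\to \G_{s(x)}$. Representing $f(x)\in \A^{(s(x))}$ as a matrix $[f(x)(a,b)]_{a,b\in \G_{s(x)}}$ over $A_{s(x)}$, I set
\[\varphi_u^{\otimes}(f)(p,q)=\varphi_u\bigl(x\mapsto f(x)(px, qx)\bigr),\qquad p,q\in \G_u,\]
noting that for each fixed $(p,q)$ the inner function is a bounded section in $\Gamma_{\infty}(\G^u, s^*\A)$. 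The left-inverse identity then follows from the computation $\widetilde{\im}_u(F)(x)(px,qx)=\widetilde{\alpha}_{x^{-1}}(F)(px,qx)=\alpha_{x^{-1}}(F(p,q))=\im_u(F(p,q))(x)$, combined with $\varphi_u\circ \im_u=\id_{A_u}$. The $\G$-equivariance of $\{\varphi_u^{\otimes}\}$ is a direct consequence of the $\G$-equivariance of $\{\varphi_u\}$ together with the defining formula of $\widetilde{\alpha}$.

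The main obstacle I anticipate is verifying that $\varphi_u^{\otimes}(f)$ actually lies in $\A^{(u)}$, i.e.\ satisfies the matrix bound \eqref{e2}. The difficulty is that the index sets $\G_{s(x)}$ over which the matrices $f(x)$ range depend on $x$, so the construction cannot be identified with a single tensor-product amplification in an immediate way. My intended resolution is to use the bijections $p\mapsto px$ to transport every $f(x)$ to a matrix over the common index set $\G_u$, and then to exploit the complete positivity of $\varphi_u$ to show that each finite compression of the resulting matrix over $\G_u$ has norm bounded by $\sup_x\|f(x)\|$, thereby securing \eqref{e2} and with it the membership $\varphi_u^{\otimes}(f)\in \A^{(u)}$.
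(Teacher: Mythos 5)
Your proposal follows the paper's overall skeleton --- reduce, via Proposition~\ref{prop1} and the characterization of $\G$-injectivity, to exhibiting each fiber $\A^{(u)}$ as injective together with a $\G$-morphism retraction $\Gamma_{\infty}(\G^u, s^*\A^{\otimes})\To \A^{(u)}$ that is a left inverse of $\widetilde{\im}_u$ --- but your construction of that retraction is genuinely different from the paper's, and in fact it repairs a defect in the printed argument. The paper uses $\G$-injectivity of $\A$ twice: once to get $\phiu$ with $\varphi_u\circ\im_u=\id_{A_u}$, and once to get a $\G$-morphism $\theu$ from $\A^{\otimes}$ to $\A$ with $\theta_u\circ(\id_u\otimes 1)=\id_{A_u}$; it then applies $\theta$ entrywise to define $\Theta_u:\Gamma_{\infty}(\G^u,s^*\A^{\otimes})\To\Gamma_{\infty}(\G^u,s^*\A)$ and sets $\widetilde{\varphi}_u=(\id_u\otimes 1)\circ\varphi_u\circ\Theta_u$. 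Since that composition factors through the single fiber $A_u$, its range lies in the diagonal copy $(\id_u\otimes 1)(A_u)$, so the claimed identity $\widetilde{\varphi}_u\circ\widetilde{\im}_u=\id_{A^{(u)}}$ cannot hold on non-diagonal elements of $A^{(u)}$: off-diagonal entries are irretrievably lost. Your entrywise amplification $\varphi_u^{\otimes}(f)(p,q)=\varphi_u\bigl(x\mapsto f(x)(px,qx)\bigr)$ avoids exactly this collapse: as you compute, $\widetilde{\im}_u(F)(x)(px,qx)=\alpha_x^{-1}(F(p,q))=\im_u(F(p,q))(x)$, so the left-inverse identity holds entry by entry, and the equivariance check does go through using the formula for $\widetilde{\alpha}$. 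The price of your route is the matrix bound \eqref{e2}, which you correctly single out as the main issue, and your resolution is the right one: for finite $F\subseteq\G_u$ the compression $[\varphi_u^{\otimes}(f)(p,q)]_{p,q\in F}$ is the $|F|$-fold amplification of $\varphi_u$ applied to $[x\mapsto f(x)(px,qx)]_{p,q\in F}$, and each matrix $[f(x)(px,qx)]_{p,q\in F}$ is a compression of $f(x)$ along the injection $p\mapsto px$ of $F$ into $\G_{s(x)}$, hence of norm at most $\sup_x\|f(x)\|$; complete positivity of $\varphi_u$ then gives a uniform bound on all finite compressions (and, by the same argument, complete positivity and unitality of $\varphi_u^{\otimes}$). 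Note also that you verify fiberwise injectivity of $\A^{(u)}=A_u\overline{\otimes}B(\ell^2(\G_u))$, which the characterization proposition requires and which the paper leaves implicit; by the paper's convention that elements of the monotone tensor product are exactly the $A_u$-valued matrices satisfying \eqref{e2}, your entrywise-projection argument for this step is standard. In short: same strategy and same reduction, but your key construction of the retraction replaces the paper's and is the version that actually works.
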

\begin{proof}
Since $M(\G, \A)$ is the fixed point algebra associated to the groupoid dynamical system $\A^{\otimes}$,  it is sufficient to prove that  $\A^{\otimes}$is injective. Since $\A$ is $\G$-injective,  there exists a $\G$-map $\phiu$ from
$ (\G, \{ \Gamma_{\infty}(\G^u, s^*\A)\}_{u\in \Go}, \ell)$ into $\A$ such that for any $u\in \Go$, $\phi_u\circ\im_u=\id_{A_u}$.
Define the map
\[\id_u\otimes 1: A_u\To A^{(u)},\]
by
\[\id_u\otimes 1(a)(x,y)=\left\{
\begin{array}{lr}
a&x=y\\
0& x\neq y.
\end{array}
\right.
\]
Since $\A$ is $\G$-injective, there exists a $\G$-map $\theu$  from $\A^{\otimes}$ into $\A$ such that, for each $u\in \Go$,
$\theta_u\circ\id_u\otimes 1=\id_{A_u}$.
Also, define
\[\Theta_u: \Gamma_{\infty}( \G^u, s^* \A^{\otimes})\To \Gamma_{\infty}( \G^u, s^* \A),\]
by
$\Theta_u (F)(g)=\theta_{s(g)}(F(g))$.
Then, for  $u\in \Go$, $\theta_u\circ\widetilde{\im}_u\circ \id_u\otimes 1=\im_u$. Set $\widetilde{\varphi}_u =\id_u\otimes 1\circ \varphi _u\circ \theta_u$. Then $\widetilde{\Phi}=\{\widetilde{\varphi}_u\}_{u\in\Go}$ is a $\G$-map from the dynamical system $(\G, \{\Gamma_{\infty}(\G^u, s^* \A^{\otimes})\}_{u\in\Go}, \ell)$ into $\A^{\otimes}$ such that $\widetilde{\varphi}_u\circ\widetilde{\im}_u=\id_{A^{(u)}}$. Thus  $\A^{(\alpha)}$ is $\G$-injective.
\end{proof}
 Let $s: X \To X_0$ be a surjective map and $\{H_u\}_{u\in X_0}$ be a family of Hilbert spaces. Let  $\ell^2(X, s^*\mathcal{H})$  be the set of all  functions $\xi: X\To \cup_{u\in X_0} H_u$ such that $\xi(x)\in H_{s(x)}$ and
\[\|f\|=\sum_{x\in X} ||f(x)||^2<\infty.\]
Then $\ell^2(X, s^*\mathcal{H})$ is a Hilbert space with the pointwise operations and the inner product
\[\langle \xi, \eta\rangle=\sum_{x\in X} \langle \xi(x), \eta(x)\rangle,\]
which  is
canonically isomorphic to the $\ell^2$-direct sum
\[\bigoplus_{u\in X_0}\ell^2(s^{-1}(u), H_u).\]

Suppose that  $\A=\gad$  is a groupoid dynamical system  and  suppose that for any $u\in \Go$, $H_u$  is  a Hilbert space such that $A_u\subset B(H_u)$. For $f\in \Gamma_c(\G, r^*\mathcal{A})$ define $\Pi(f):\ell^2(\G , s^*\mathcal{H})\To \ell^2(\G, s^*\mathcal{H})$  by
\begin{equation}
(\Pi(f)\xi)(x)=\sum_{t\in \G^{r(x)}} \alpha_x^{-1}(f(t))\xi(t^{-1}x).
\end{equation}
This is a faithful representation for $\Gamma_c(\G, r^*\mathcal{A})$. The norm closure $\mathcal{A}\rtimes_r \G$ of
$\Pi(\Gamma_c(\G, r^*\mathcal{A}))$ in  $B(\ell^2(\G, s^*\mathcal{H}))$ is called the reduced crossed product of $\mathcal{A}$ by $\G$.

Let $\A=\gad$ be a groupoid dynamical system such that for any $ u\in \Go$, $A_u$ is monotone complete. For $u,v\in \Go$ and $a\in A_v$,  define
\[\pi_u^v(a) : \G_u\times \G_u\To A^u\]
by
\begin{align*}
\pi_u^v(a)(x, y)=
\left\{
                \begin{array}{ll}
                \alpha_x^{-1}(a), & x=y \in \G^v_u \\
                  0, & \hbox{otherwise,}
                \end{array}
              \right.
\end{align*}
Also, for $g\in G$, define
\begin{align*}
 &\lambda_u(g): \G_u\times \G_u\To A_u \\
& \lambda_u(g)(x, y)=\left\{
                   \begin{array}{ll}
                     1, & xy^{-1}=g \\
                    0, & \hbox{otherwise.}
                   \end{array}
                 \right.
\end{align*}
Then $\pi^v=(\pi_u^v)_{u\in \Go}$ and $ \lambda(g)=(\lambda_u(g))_{u\in \Go}$ are in $M(\G, \A)$. For
$a\in A_v$ and $\xi=(\xi_u)_{u\in \Go}\in \ell^2(\G, s^*\mathcal{H})$,   we have
$\pi^v(a)\xi=(\pi_u^v (a)\xi_u)_{u\in \Go} $  and $\pi_u^v(a)\xi_u(x)=\alpha_x^{-1}(a)\xi_u(x)$. Also, for $g\in \G$, $\lambda_g\xi=( \lambda_u(g)\xi_u)_{u\in \Go}$ and
\[ \lambda_u(g)\xi_u(x)=\left\{
\begin{array}{lr}
\xi_u(g^{-1}x)& r(x)=r(g)\\
0,& \hbox{otherwise.}
\end{array}
\right.
\]
Hence,
\[(\pi_u^v(a)\circ \lambda_u(g))\xi_u(x) =\left\{
                           \begin{array}{lr}
                            \alpha_x^{-1}(a)\xi_u(g^{-1}x) , & r(x)=r(g)=v   \\
                             0, & \hbox{otherwise.}
                           \end{array}
                         \right.\]
Since the reduced crossed product $\mathcal{A}\rtimes_{r} \G$ is generated by the set
\[\left\{\pi^v(a)\lambda(g): v\in \Go a \in \A _v, g\in \G, \right\},\]
  $M(\A, \G)$ contains $\mathcal{A}\rtimes_{r} \G$ .

For each $v\in \Go$, $\pi^v(A_v)$ is a C*-subalgebra of $M(\G, \A)$ and for $g\in \G $ and $a\in A_{s(g)}$ we have
\[\lambda(g) \pi^{s(g)}(a) \lambda^*(g)=\pi^{r(g)}(\alpha_g(a)).\]
Let $\alpha'_g(\pi^{s(g)}(a))=
\pi^{r(g)}(\alpha_g(a))$, then $(\G, \{\pi^v(A_v)\}_{v\in \Go}, \alpha')$ is a groupoid dynamical system,  isomorphic to the original system $\A$.

Now we are ready to prove the main result of this section, which is essential in further development of the theory \cite{ab}.

\begin{thm}
Suppose that $\A=\gad$ be a groupoid dynamical system. Then
\[I_{\G}(\A)\rtimes_r \G\subseteq I(\A\rtimes_r \G ).\]
\end{thm}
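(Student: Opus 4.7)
The plan is to embed $I_{\G}(\A)\rtimes_r \G$ completely isometrically into $I(\A\rtimes_r \G)$ by first identifying inside $I(\A\rtimes_r \G)$ a $\G$-injective groupoid dynamical system that contains $\A$, then transporting $I_{\G}(\A)$ in via the universal property of the $\G$-injective envelope, and finally extending across the reduced crossed product using the covariance relations already available inside $I(\A\rtimes_r \G)$.

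First I would fix a faithful representation of $\A\rtimes_r \G$ and work inside its injective envelope $I(\A\rtimes_r \G)$, so that the images $\pi^u(A_u)$ and the partial isometries $\lambda(g)$ introduced at the end of Section 5 all live inside this injective unital C*-algebra. For each $u\in \Go$ set $J_u := \pi^u(1_{A_u})\, I(\A\rtimes_r \G)\, \pi^u(1_{A_u})$; as a corner of an injective C*-algebra, each $J_u$ is itself injective. The identity $\lambda(g)\pi^{d(g)}(1_{A_{d(g)}})\lambda(g)^{*}=\pi^{r(g)}(1_{A_{r(g)}})$ shows that $\mathrm{Ad}\,\lambda(g)$ restricts to a complete order isomorphism $\hat\alpha_g\colon J_{d(g)}\to J_{r(g)}$, so $\mathcal J:=(\G,\{J_u\}_{u\in \Go},\hat\alpha)$ is a groupoid dynamical system and $\pi$ realises $\A$ as a $\G$-dynamical subsystem of $\mathcal J$.

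Next I would show that $\mathcal J$ is $\G$-injective. By the characterisation of $\G$-injectivity from the Proposition in Section 3, this reduces to producing a $\G$-morphism from $(\G,\{\Gamma_\infty(\G^u,s^*\mathcal J)\}_{u\in \Go},\ell)$ back to $\mathcal J$ which retracts the canonical embeddings. The decisive feature is that the $\G$-action on $\mathcal J$ is \emph{inner} in the ambient injective C*-algebra $I(\A\rtimes_r \G)$, implemented by the partial isometries $\lambda(g)$; combined with the C*-injectivity of each corner $J_u$ this allows one to assemble the required retraction fibrewise using $\mathrm{Ad}\,\lambda(g)$, in analogy with the classical fact that an injective C*-algebra carrying an inner group action is group-injective. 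Once $\mathcal J$ is known to be $\G$-injective and $\A\hookrightarrow \mathcal J$ is a $\G$-embedding, the universal property of the $\G$-injective envelope (Theorem~\ref{t1}) furnishes a $\G$-morphism $\Phi=\{\varphi_u\}_{u\in \Go}\colon I_{\G}(\A)\to \mathcal J$ extending the inclusion, and the essentiality/rigidity of the envelope (Lemma~\ref{l4}) forces each $\varphi_u$ to be completely isometric.

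Finally, I would push $\Phi$ to the reduced crossed product by declaring $\tilde\Phi(\pi^u(a)\lambda(g)):=\varphi_u(a)\lambda(g)$; the $\G$-equivariance of $\Phi$ together with the covariance relations inside $I(\A\rtimes_r \G)$ make this a well-defined $\ast$-homomorphism on the algebraic crossed product, which I would then extend continuously and show to be isometric on $I_{\G}(\A)\rtimes_r \G$ by comparing the canonical faithful conditional expectation $E\colon I_{\G}(\A)\rtimes_r \G\to I_{\G}(\A)$ with the restriction to $\tilde\Phi(I_{\G}(\A)\rtimes_r \G)$ of an appropriate expectation onto the corner bundle inside $I(\A\rtimes_r \G)$. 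The main obstacle I anticipate is the $\G$-injectivity of $\mathcal J$: in the group case it is standard that an injective C*-algebra with an inner action is group-injective, but in the groupoid setting $\lambda(g)$ is only a partial isometry between different corners $J_{d(g)}$ and $J_{r(g)}$, so the retraction must be produced fibrewise and one has to coordinate the injective-envelope projections on each corner in a globally $\G$-equivariant way across all composable pairs. A secondary, more routine, difficulty is the isometry of $\tilde\Phi$ on the reduced crossed product, which boils down to transferring faithfulness of the canonical conditional expectation correctly through the embedding of the corner systems.
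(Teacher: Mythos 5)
Your argument stands or falls with the claim that $\mathcal J=(\G,\{J_u\}_{u\in \Go},\mathrm{Ad}\,\lambda)$ is $\G$-injective, and that claim is not established. The ``classical fact'' you invoke in its support --- that an injective C*-algebra carrying an inner action is equivariantly injective --- is false: the trivial action of a discrete group $G$ on $\mathbb{C}$ is inner (implemented by the unitary $1$), and $\mathbb{C}$ is injective, yet a $G$-equivariant extension of $\mathrm{id}_{\mathbb{C}}$ along the $G$-embedding $\mathbb{C}\subseteq \ell^\infty(G)$ is exactly an invariant mean, so $\mathbb{C}$ is $G$-injective if and only if $G$ is amenable. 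Innerness by itself contributes nothing; what is needed is a $\G$-equivariant u.c.p. retraction of $(\G,\{\Gamma_\infty(\G^u,s^*\mathcal J)\}_{u\in \Go},\ell)$ onto $\mathcal J$, coordinated across all fibres and composable pairs, and the partial isometries $\lambda(g)$ alone do not produce it. Worse, in the group case the known route to the statement you need (that the injective envelope of $A\rtimes_r G$, with its conjugation action, is $G$-injective) goes through Hamana's identification of $I(A\rtimes_r G)$ with the monotone complete crossed product of $I_G(A)$ --- a statement \emph{stronger} than the inclusion being proved here. So your central step is at least as hard as the theorem itself; the argument is effectively circular.

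The paper avoids this by running the comparison in the opposite direction. It first proves that the monotone crossed product $M(\G, I_{\G}(\A))$ is an injective C*-algebra, so that both $I_{\G}(\A)\rtimes_r\G$ and a copy of $I(\A\rtimes_r\G)$ sit inside it; injectivity of $I(\A\rtimes_r\G)$ then extends the inclusion $\A\rtimes_r\G\subseteq I(\A\rtimes_r\G)$ to a u.c.p. map $\psi\colon I_{\G}(\A)\rtimes_r\G\to I(\A\rtimes_r\G)$, and the whole proof consists in showing $\psi$ is the identity: since $\psi$ fixes each $\lambda(g)$, Choi's multiplicative-domain theorem gives $\psi(\lambda(g)f\lambda(g)^*)=\lambda(g)\psi(f)\lambda(g)^*$, and the conditional expectations $\rho_v$ onto the corners $\pi^v(I_{\G}(\A)_v)$ combined with the fibrewise $\G$-essentiality of $I_{\G}(\A)$ over $\A$ force $\psi(\pi^v(a))=\pi^v(a)$, so all generators of $I_{\G}(\A)\rtimes_r\G$ already lie in $I(\A\rtimes_r\G)$. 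No map at the crossed-product level is ever constructed by hand, only corrected to the identity. Note also that even granting your $\G$-injectivity claim, your final step has an independent gap: each $\varphi_u$ is only a unital complete isometry, so its image is an operator system whose Choi--Effros product need not agree with (or even be closed under) the multiplication of $J_u$; consequently $\tilde\Phi(\pi^u(a)\lambda(g)):=\varphi_u(a)\lambda(g)$ need not be multiplicative, and its well-definedness and continuity on the reduced crossed product do not follow from equivariance alone.
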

\begin{proof}
Since $\A\rtimes_r \G\subseteq I_{\G}(\A)\rtimes_r \G\subseteq M(\G, I_{\G}(\A))$ and
$M(\G, I_{\G}(\A))$ is an injective C*-algebra,
we may consider $ I(\A\rtimes_r \G )\subseteq M(\G, I_{\G}(\A))$. The inclusion map $$j: \A\rtimes_r \G\To I_{\G}(\A)\rtimes_r \G$$ extends to a completely positive map $$\psi: I_{\G}(\A)\rtimes_r \G\To  I(\A\rtimes_r \G )\subseteq M(\G, I_{\G}(\A)).$$  Since $\psi $ is completely positive and preserves $\lambda(G)$,  by \cite[3.1]{ch}, for any $ f\in  I_{\G}(\A)\rtimes _r \G$, $\psi(\lambda(g)f\lambda(g)^*)= \lambda(g)\psi(f)\lambda(g)^*$.
 For $v\in \Go$, set $\psi_v=\psi |_{\pi^v(I_{\G}(\A)_v)}$  and suppose that
 $ \rho_v: M(\G, I_{\G}(\A))\To \pi^v(I_{\G}(\A)_v)\subseteq M(\G, I_{\G}(\A))$  is the map defined by
\[\rho_v((F_u)_{u\in \Go})=\pi^v(F_v(v, v)).\]

Thus $\rho_v$ is a conditional expectation and if $\rho_v((F_u)_{u\in \Go}^*(F_u)_{u\in \Go})=0$ then for any $u\in \Go$ and $x\in \G_u^v,y\in\G_u$, $F_u(x,y)=0$.

 For $g\in \G$  and $F\in M(\G, I_{\G}(\A))$,
\begin{align*}
\tilde{\rho}_{r(g)}(\lambda(g)(F_u)_{u\in \Go}\lambda(g)^*)
&= \pi^{r(g)}(\lambda_{r(g)}(g)\circ F_{r(g)}\lambda_{r(g)}(g)^*)\\
&= \pi^{r(g)}(F_{r(g)}(g^{-1}, g^{-1}))\\
&= \pi^{r(g)}(\alpha_g(F_{s(g)}(s(g), s(g)))\\
&=\lambda(g)\pi^{s(g)}(F_{s(g)}(s(g), s(g)))\lambda(g)^*\\
&=\lambda(g) \tilde{\rho}_{s(g)} \lambda(g)^*.
\end{align*}
Since $(\G, \{\pi^v(I_{\G}(A)_v)\}_{v\in \Go}, \alpha') $ is a $\G$-essential extension of the dynamical system $(\G, \{\pi^v(A_v)\}_{v\in \Go}, \alpha')$ and $\rho_v$ and $\psi_v$ preserve $\pi^v(A_v)$, $\widetilde{\rho}_v\circ \psi_v(x)=x$, for each $x\in \pi^v(I_{\G}(A)_v)$.  By a similar arguments as in the proof of  \cite[Lemma 3.3]{ham1}, we conclude that
\[ \rho_v((\psi(\pi^v(a))-\pi^v(a))^*(\psi(\pi^v(a))-\pi^v(a)))=0,\]
thus, for any $u\in \Go$ and $x\in \G^v_u, y\in \G_u, $
\[\psi(\pi^v(a))_u(x,y)=\pi_u^v(a)(x,y),\]
and hence
$\psi(\lambda_v\pi^v(a))=\lambda_v\psi(\pi^v(a))=\pi^v(a).$  This means that $\pi^v(a)$ is in the range of $\psi$ . Since $I_{\G}(\A)\rtimes_r \G$ is generated by the operators $\lambda_g$ and $\pi^v(a)$, we get $I(\A\rtimes \G )\supseteq I_{\G}(A)\rtimes_r \G$.

\end{proof}

{\bf References}

\end{document}